\documentclass[10pt, a4paper]{article}
\usepackage{lipsum}
\usepackage{amsfonts, amsthm, amsmath, amssymb}
\usepackage{graphicx}
\usepackage{psfrag}

\usepackage{epstopdf}
\usepackage{algorithmic}
\usepackage{authblk}
\usepackage{url}
\usepackage[hidelinks]{hyperref}
\usepackage[noabbrev]{cleveref}
\ifpdf
\DeclareGraphicsExtensions{.eps,.pdf,.png,.jpg}
\else
\DeclareGraphicsExtensions{.eps}
\fi
\usepackage{graphicx,epstopdf} 
\usepackage[caption=false]{subfig} 
\usepackage{pgfplots}
\usepackage{tikz}
\usetikzlibrary{matrix}

\newcommand{\E}{\varphi}
\newcommand{\oX}{\overline{X}}
\newcommand{\oa}{\overline{a}}
\newcommand{\ob}{\overline{b}}
\newcommand{\oc}{\overline{c}}
\newcommand{\oDelta}{\overline{\Delta}}
\newcommand{\N}{\ensuremath{\mathbb{N}}}
\newcommand{\R}{\ensuremath{\mathbb{R}}}
\newcommand{\C}{\ensuremath{\mathbb{C}}}


\theoremstyle{definition}
\newtheorem{theorem}{Theorem}[section]

\newtheorem{lemma}{Lemma}[section]

\theoremstyle{definition}
\newtheorem{definition}{Definition}[section]

\theoremstyle{remark}
\newtheorem{remark}{Remark}

\title{Numerical Solution of Free Stochastic Differential Equations}

\providecommand{\keywords}[1]
{
	\small	
	\noindent\textbf{\textbf{Keywords }} #1
}

\providecommand{\amscodes}[1]
{
	\small	
	\noindent\textbf{\textbf{AMS Codes }} #1
}

\date{\today} 

\author[1, 2]{Georg Schluechtermann}
\author[2]{Michael Wibmer}

\affil[1]{\footnotesize Faculty of Mathematics, Informatics and Statistics, LMU Munich, Germany}
\affil[2]{\footnotesize Faculty of Mechanical, Aeronautical and Automotive Engineering,\\ University of Applied Sciences, Munich, Germany}

\begin{document}

\maketitle

\begin{abstract}
This paper derives a free analog of the Euler-Maruyama method (fEMM) to numerically approximate solutions of free stochastic differential equations (fSDEs). Simply speaking fSDEs are stochastic differential equations in the context of non-commutative random variables (e.g. large random matrices). By applying the theory of multiple operator integrals we derive a free It\^{o} formula from Taylor expansion of operator valued functions. Iterating the free It\^{o} formula allows to motivate and define fEMM. Then we consider weak and strong convergence in the fSDE setting and prove strong convergence order of $\frac{1}{2}$ and weak convergence order of ${1}$. Numerical examples support the theoretical results and show solutions for equations where no analytical solution is known.
\end{abstract}

\keywords{free stochastic differential equations, free probability theory,\\ Euler-Maruyama method, random matrix theory, stochastic differential equations, weak convergence, strong convergence}
~\\~
\amscodes{46L53, 46L54, 60H10, 65C30}
\section{Introduction}
Nowadays random matrices appear in a broad range of applications (e.g. \cite{Bouchaud2015}, \cite{freeCIR}, \cite{ADHIKARI2022108260}, \cite{XIAO2017941}, \cite{anderson_guionnet_zeitouni_2009}, \cite{MingoSpeicher2017},  \cite{Maecki2019UniversalityCF}, \cite{Johnstone8412585}, \cite{Zhang2015}, \cite{stone2018}, \cite{Soize2017}). Random matrices with certain spectral properties can be obtained as solutions of free stochastic differential equations (fSDEs). A 
fSDE is an equation of the form
\begin{equation}
	dX_t = a(X_t)dt + b(X_t)dW_tc(X_t),
\end{equation}
where the unknown $X_t$ is an operator valued process $(X_t)_{t\geq 0}$, $(W_t)_{t\geq0}$ is a free Brownian motion and $a,b,c$ are appropriate operator valued functions. At a first sight, one may think about $X_t$ as random matrices of large dimension. Free Brownian motion can be viewed of as the large $N$ limit of Brownian motions on $N\times N$ hermitian matrices (\cite{Biane1998}). By taking the large $N$ limit fSDEs are formulated in some appropriate von Neumann algebra $\mathcal{A}$. Free probability theory and free stochastic processes set up the background for a solution theory of fSDEs (\cite{kargin}). The notion of freeness is the carryover of the notion of independence of random variables to the non-commutative context. For an introduction to free probability and free stochastic processes we refer to  \cite{speicher2001free}, \cite{voicudykemanica}, \cite{MingoSpeicher2017}, \cite{anderson_guionnet_zeitouni_2009}, \cite{TaoIntroRMT}, \cite{stammvoicuweber}. A short introduction will be given at the beginning of this paper. Free stochastic calculus first appeared in \cite{Speicher1990} and was further developed by \cite{kummererspeicher}, \cite{BianezbMATH01003147} and \cite{Biane1998}. The notion of free stochastic processes, free Brownian motion and a free analog of the It\^{o} formula were introduced in \cite{Biane1998}. For definition of the free It\^{o} integral, we refer to \cite{anshelevic}. Free stochastic processes form an active research area, we refer to \cite{barnodorff10.2307/3318705}, \cite{speicher2001free}, \cite{Biane1998-2}, \cite{AnGao}, \cite{GAO2006177}, \cite{ZHAOZHI}, \cite{freeCIR}.
fSDEs first appeared in \cite{kummererspeicher}, \cite{BianezbMATH01003147} and \cite{BIANESPEICHER2001581}, where such equations are motivated by studying large $N$ quantum field theory and corresponding matrix models. A first existence theory and a variety of fSDEs were analytically studied in \cite{kargin}. Recently in \cite{freeCIR} a free variant of the Cox-Ingersoll-Ross model (\cite{HighamKloeden2020}) is considered in the context of financial mathematics.\\
To the best of our knowledge the numerical solution of free stochastic differential equations has not yet been studied before. In \cite{BIANESPEICHER2001581} an Euler-like method was applied to prove the existence of a solution of a special fSDE and furthermore regularity results were obtained in the operator norm.\\
 The purpose of this paper is to develop, analyze and apply a method for numerical approximation of fSDEs. As in the classical case we start by developing a free analog of the Euler-Maruyama method (fEMM). The derivation of the method will be stated and carried out by considering the fSDE in a von Neumann Algebra $\mathcal{A}$ with faithful unital normal trace. The free It\^{o} formula \cite[Theorem 4.1.2]{Biane1998} will play a central role in this context. The free analog to the classical It\^{o} formula was derived by applying the concept of double operator integrals on $\mathcal{B}(\mathcal{H})$. The perturbation theory of operator valued functions and extension to multiple operator integrals was further developed, which gives rise to new conceptual and technical tools. We make use of these developments (\cite{Skripka2019}, \cite{pisier_2003}) to reformulate the free It\^{o} formula. The formalism defined in \cite{azamov_carey_dodds_sukochev_2009} allows for a consistent and effective formulation of the free It\^{o} formula. Based on a Taylor polynomial of operator functions (\cite{azamov_carey_dodds_sukochev_2009}, \cite{Skripka2019}) we derive the It\^{o} formula (\cite[Proposition 4.3.4]{Biane1998}) directly from the stochastic product rules. The proof of this proposition relies on approximating via polynomials. Alternatively we derive the free It\^{o} formula via Taylor expansions of the corresponding operator valued functions. Then we are in the position to formulate an iterated version of the free It\^{o} formula, which allows the motivation and definition of a free analog of the Euler-Maruyama method (fEMM). We will give proofs for strong and weak convergence properties. It turns out that strong convergence is of order $\frac{1}{2}$. This is mainly driven by the fact that the $L_2(\varphi)$-norm of the stochastic integral $\int_{\Delta t} a_tdW_tb_t$ is of $O(\sqrt{\Delta t})$. Additionally, the coefficient functions $a,b,c$ of the fSDE need to be operator Lipschitz in $L_2(\varphi)$. The existence theory in \cite{kargin} requires $a,b,c$ to be Lipschitz in operator norm of $\mathcal{A}$. Weak convergence of order one is proven under the assumptions $a,b,c$ are uniformly bounded in $\mathcal{A}$ and belonging to certain proper spaces $W_n(\R)$ (see \cite{azamov_carey_dodds_sukochev_2009},\cite{Skripka2019}). Several examples show the capability of fEMM to approximate solutions of fSDEs. We will verify that fEMM can well numerically approximate spectral properties of the solution of the underlying fSDE.\\
To be able to implement fEMM as a numerical method on a computer, it is necessary to consider the fEMM in a von Neumann algebra of random matrices. We will show that the large $N$ limit of fEMM on matrix level leads to a fEMM defined in a finite von Neumann algebra (just as limits of random matrices end up in a infinite dimensional von Neumann Algebra). Weak and strong convergence properties do hold in any appropriate von Neumann algebra. Since random matrices form itself a von Neumann algebra, the convergence properties of fEMM in $\mathcal{A}$ carry over to the numerical algorithm. Additionally we show that both limits $N\rightarrow\infty$ and $\Delta t\rightarrow 0$ commute. We give examples by applying fEMM to equations considered in \cite{kargin} and show, that both limits of step size and matrix size commute and converge in distribution to the distribution of the solution. We will numerically verify the theoretically obtained properties of weak and strong convergence.\\
The paper is organized as follows. The necessary ingredients to define fSDEs are summarized in \cref{sec:FreeStochCalc} and \cref{sec:fSDE}. In \cref{sec:freeItoFuncForm} we formulate the free It\^{o} formula in the context of the framework of multiple operator integrals developed in \cite{azamov_carey_dodds_sukochev_2009}. We then derive an iterated It\^{o} formula which allows to motivate the free Euler-Maruyama method. Next, the new numerical algorithm is defined in \cref{sec:fEMM}. The main results regarding convergence properties are given in \cref{sec:main}. Section \ref{sec:Examples} shows examples of the numerical approximation for fSDEs.
\section{Free Stochastic Calculus}
\label{sec:FreeStochCalc}

In this chapter we summarize the main results of free probability theory and free stochastic calculus. Free stochastic calculus was initiated by \cite{Speicher1990} and developed in a series of papers in \cite{kummererspeicher}, \cite{Biane1998}, \cite{anshelevic} and \cite{BianezbMATH01003147}. We will introduce the notion of free Brownian motion, free stochastic calculus which includes a free analog of the classical It\^{o} formula.

\subsection{Free Probability Theory}
\label{sec:FreeProbaTheo}

Consider a classical probability space $(\Omega, \mathcal{F}, \mu)$ and random variables as measurable functions $X:\Omega \rightarrow \R$. By taking an algebraic viewpoint these random variables $X$ form an (commutative) algebra, where it is possible to assign expectations $\mathbb{E}(X)$ to each random variable. This change of viewpoint allows to consider cases, where the random variables are non-commutative. The space $\mathcal{M}_N(\R)=L^\infty\left(\Omega,\mu,\text{Mat}_N(\R)\right)$ builds up a star-algebra with the unit matrix as identity and $\varphi(M)=\frac{1}{N}\mathbb{E}(\text{tr}(M))$ as a trace. By help of non-commutative algebras it is possible to develop non-commutative probability theory. The limits $N\rightarrow\infty$ can be handled properly in algebraic structures and leads to fruitful concepts. It turns out that non-commutative probability theory is realized by using operator algebras such as von Neumann algebras or algebras of bounded operators on a Hilbert space. We refer to \cite{TaoIntroRMT}, \cite{Biane1998}, \cite{anderson_guionnet_zeitouni_2009} for setting up non-commutative probability theory and relations to random matrices. To be complete, we give the following general definition (see e.g. \cite{werner}). 
\begin{definition}
	A non-commutative probability space is a pair $(\mathcal{A}, \varphi)$, where $\mathcal{A}$ denotes a von Neumann operator algebra and $\varphi:\mathcal{A}\rightarrow \C$ a faithful unital normal trace.
\end{definition}
Since the trace is finite we may consider $\mathcal{A}$ as a subset of the predual $L_1(\varphi)$ of the von Neumann algebra $\mathcal{A}=L_\infty(\varphi)$. For $1\leq p<\infty$ we define $\|X\|_p=\varphi(|X|^p)^{\frac{1}{p}}$. By $\| \cdot \|$ we denote the usual operator norm in $\mathcal{A}$. Although the definition of a non-commutative probability space is rather abstract, once the concepts are stated, they turn into background when working on numerical methods. The notion of independence of classical random variables is extended to the non-commutative setting by the concept of freeness of subalgebras of $\mathcal{A}$. Let $\mathcal{A}_1,\dots \mathcal{A}_n$ be a family of $n\in\N$ subalgebras of $\mathcal{A}$. They are called freely independent (or simply free) in the sense of Voiculescu, if $\varphi\left(X_1X_2\dots X_m\right)=0$ 
whenever the following conditions
\begin{enumerate}
	\item $X_j\in\mathcal{A}_{i(j)} $, where $i(1)\neq i(2), i(2)\neq i(3), \dots , i(n-1)\neq i(n)$, $j=1,\dots,m$
	\item $\varphi(X_i)=0$ for all $i=1,\dots,n$
\end{enumerate} 
hold. If $X\in\mathcal{A}$ is a self-adjoint element, then there is a spectral measure $\mu$ on $\R$ so that the moments of $X$ are the same as the moments of the probability measure $\mu$ defined by
$
	\varphi(X^k)=\int_\R x^k d\mu(x).
$
An important role in the subsequent plays the Cauchy transform $G_X$ of $\mu$ defined by
$
	G_X(z)=\int_{\R} \frac{d\mu(x)}{x-z},
$
which is an analytic function defined on $\C^+$ with values in $\C^+$. The Cauchy transform $G_X$ is the expectation of the resolvent of $X$, i.e.
$
	G_X(z)=\varphi\left(\left(X-z\right)^{-1}\right).
$
The Cauchy transform carries all the properties of the spectral probability distribution of the self-adjoint operator $X$. In \cite{Biane1998} and \cite{kargin} it is shown how fSDEs can be handled by a corresponding deterministic partial differential equations of the Cauchy transform $G_X$. We will strongly depend on these results since it allows us to check the numerical results obtained in \cref{sec:fEMM} by the free stochastic Euler method defined.
\subsection{Free Brownian Motion}
\label{sec:FreeBrownMotion}
Motivated from the concept of classical Brownian motion the definition within non-commutative probability is as follows. Consider a von Neumann Algebra $\mathcal{A}$ with a faithful normal tracial state $\varphi:\mathcal{A}\rightarrow \C$.
A filtration $\mathbb{F}=(\mathcal{A}_t)_{t \geq 0}$ is a family of subalgebras $\mathcal{A}_t$ of $\mathcal{A}$ with $\mathcal{A}_s \subset \mathcal{A}_t$ for $s\leq t$. A free stochastic process is a family of elements $(X_t)_{t\geq 0}$ for which the increments $X_t - X_s$ are free with respect to the subalgebra $\mathcal{A}_s$. A process $(X_t)_{t\geq0}$ is called {\it adapted} to the filtration $\mathbb{F}$ if $X_t\in\mathcal{A}_t$ for all $t\geq 0$. 
\begin{definition}
	A free Brownian motion is a family of self-adjoint elements $(W_t)_{t\geq 0}$ which admit the properties
	\begin{enumerate}
		\item $W_0=0$.
		\item The increments $W_t-W_s$ are free from $\mathcal{W}_s$  for all $0\leq s<t$. The subalgebra $\mathcal{W}_s$ is generated by all $W_\tau$ with $\tau\leq s$.
		\item The increment $W_t-W_s$  is semicircle with mean $0$ and variance $t-s$ for all $0\leq s<t$.
	\end{enumerate}
\end{definition}
We define the filtration $\mathbb{F}=(\mathcal{W}_t)_{t\geq0}$ where $\mathcal{W}_t$ ist generated by all elements $W_s,s<t$.
\begin{remark}
	Free Brownian motion $(W_t)_{t\geq 0}$ can be viewed as large $N$ limit of $N\times N$ hermitian random matrices having classical independent Brownian motion entries $b_{ij}(t)$ (\cite{anshelevic}, \cite{Biane1998}). Considering the symmetric $N$-dimensional quadratic random matrix 
	$
		W^N_t := \frac{1}{\sqrt{N}}\left(b_{ij}(t)\right)_{N\times N},
	$
	the limit $\lim\limits_{N\rightarrow \infty}W_t^N$ defines an element $W_t$ in a von Neumann algebra $\mathcal{A}$ with trace
	$
		\varphi(\cdot) = \lim\limits_{N\rightarrow \infty}\mathbb{E}(\frac{1}{N}\text{tr}(\cdot)).
	$
\end{remark}

\subsection{Stochastic Integration with Respect to Free Brownian Motion}
\label{sec:FreeStochInt}

Let $(W_t)_{t\geq0}$ be a free Brownian motion. Let $a,b$ be mappings $[0,T]\rightarrow\mathcal{A}$ such that $\|a(t)\|\|b(t)\|\in L_2([0,T])$ and $a(t), b(t)\in\mathcal{W}_t$. We shorten the notation $a(t)=a_t,b(t)=b_t$ in the following, if there is no danger of confusion. Under these assumptions it is possible to define an  It\^{o}-style free stochastic integration with respect to free Brownian motion (\cite{Biane1998}, \cite{anshelevic}), written as 
\begin{equation}
	\int_0^t a_sdW_s b_s.
\end{equation}
For details of the definition and conditions for the existence and properties we refer to \cite{kargin}, \cite{anshelevic}, \cite{Biane1998}. The free stochastic integral fulfills a free analog of Burkholder-Gundy martingale inequalities (Section 3.2. in \cite{Biane1998}), i.e.
\begin{equation}\label{ineq:BurkholdGundy}
	\left\| \int_0^t a_sdW_sb_s\right\| \leq 2\sqrt{2}\left( \int_0^t \|a_s\|^2 \|b_s\|^2ds \right)^{\frac{1}{2}}.
\end{equation}
\subsection{Free It\^{o} Formula and - Process}
\label{sec:FreeIto}

An important ingredient in the development of numerical methods for fSDEs and their convergence properties is a free analog of the It\^{o}-formula (\cite[Section 4]{Biane1998}, \cite{kummererspeicher}, \cite{anshelevic}, \cite{kargin}). In terms of stochastic integrals the stochastic product rule is given in \cite[Theorem 4.1.2]{Biane1998} and can simply be written in differential form as (\cite{kargin})
\begin{equation}\label{freeItoFormulaAsInKargin}
	a_tdW_tb_t \cdot c_tdW_td_t= \varphi\left(b_tc_t\right)a_td_tdt.
\end{equation}
In the important case $a=c=d=1$ this yields the formal rules 
$dW_tb(X_t)dW_t = \varphi\left(b(X_t)\right)dt$ and $dW_tdW_t=dt.$
In the following we restrict ourselves to self-adjoint elements $a_t,b_t,c_t,d_t\in\mathcal{A}$ and denote the set of self-adjoint elements of $\mathcal{A}$ by $\mathcal{A}^{sa}$.
\begin{definition}\label{def:FreeItoProcess}
	Let $(W_t)_{t\geq0}$ be a free Brownian motion and $\mathbb{F}$ it's natural filtration. 
	An adapted mapping $X_t:[0,T]\rightarrow \mathcal{A}^{sa}$ is called a free It\^{o}-process, if there are operator valued functions $a_i,b_i,c_i:[0,T]\rightarrow\mathcal{A}^{sa}$ and an element $X_0\in \mathcal{A}^{sa}_0$ so that 
	\begin{equation}\label{intro-def-freeIto}
		X_t=X_0 + \int_0^t a(s)ds + \sum\limits_{i=0}^k\int_0^tb^i(s)dW_sc^i(s).
	\end{equation}
\end{definition}
\begin{remark}
	If $X_0$ is a self-adjoint element,  for $X_t$ to be self-adjoint, it is required that $a(t)$ and the sum $S=\sum\limits_{i=0}^k \int\limits_0^t b^i(s)dW_sc^i(s)$ is self-adjoint for each $t\in[0,T]$.
\end{remark}
A simple calculation shows, that the free It\^{o} formula (\ref{freeItoFormulaAsInKargin}) (in integral form see \cite[Theorem 4.1.2]{Biane1998}) implies the following $L_2(\varphi)$ isometry ($\tau<t$),
%
$
		\nonumber \left\| \int_\tau^t b_sdW_sc_s \right\|_2^2 =
 \int_\tau^t \|c_s\|_2^2 \|b_s\|_2^2ds.\label{lemma:L2Abschaetzung}
$
Note that this equality implies that $\left\| \int_\tau^t b_sdW_sc_s \right\|_2=O(\sqrt{t-\tau})$.


\section{Free Stochastic Differential Equation (fSDE)}
\label{sec:fSDE}
\begin{definition} \label{def-freeItoProcess}
	Let $X_0$ be a self-adjoint element  in $\mathcal{A}^{sa}$ and $a,b^i,c^i:\mathcal{A}\rightarrow \mathcal{A}$ continuous functions in the operator norm such that $a(\mathcal A^{sa})\subset \mathcal{A}^{sa}$. We call 
	\begin{equation}\label{intro-freeSDE-diffform}
		dX_t=a(X_t)dt+ \sum\limits_{i=0}^k b^i(X_t)dW_tc^i(X_t)
	\end{equation}
	a (formal) free Stochastic Differential Equation (fSDE).	
	 A solution to \cref{intro-freeSDE-diffform} with initial condition $X(0)=X_0$ is a process $(X_t)_{t\geq 0}$ with the following properties:
	\begin{enumerate}
		\item $X(0)=X_0$ is a self-adjoint element in $\mathcal{A}_0^{sa}$
		\item $X_t\in\mathcal{A}_t^{sa}$ for all $t\geq0$
		\item The equation
		\begin{equation}\label{intro-freeSDE-inform}
			X_t=X_0 + \int_0^t a(X_s)ds + \sum\limits_{i=0}^k\int_0^tb^i(X_s)dW_sc^i(X_s)
		\end{equation} 
	    is fulfilled for all $t\geq 0$.
	\end{enumerate}
\end{definition}
\begin{remark}
	Due to the continuity of $a,b^i,c^i$ the integrals in \cref{intro-def-freeIto} are well defined. It should be noted that these function can be taken from more general spaces (see \cite{Biane1998}), but for our purposes the continuity requirement is necessary. We only consider the autonomous case, where $a,b^i,c^i$ do not explicitly depend on $t$.
\end{remark}
\begin{remark}
An existence and uniqueness theorem for fSDEs and several examples are given in \cite{kargin}. These results rely on locally operator-Lipschitz functions $a,b^i,c^i$. The existence proofs in \cite{kargin} can easily be formulated in $L_2(\varphi)$ by applying
\cref{lemma:L2Abschaetzung} instead of the free Burkholder-Gundy inequality.	
\end{remark}
As an initial example consider the free analog of the Ornstein-Uhlenbeck process (\cite{kargin}) defined by the fSDE
\begin{equation}\label{ornsteinuhlenbeck}
	dX_t=\theta X_tdt + \sigma dW_t, \, t\geq 0, \,\,\theta, \sigma\in\R.
\end{equation}
Spectral information about the solution can be obtained by taking the Cauchy transform $G$ of the self-adjoint element $X_t$.
$G$ fulfills a deterministic partial differential equation (\cite[Proposition 3.7]{kargin}). 
Applying the Stieltjes inversion formula (see \cite{kargin}) to their  solution it is possible to recover the density of the distribution of $X_t$. In the case $\theta<0$ it turns out that the density of $X_t$ is a semicircle distribution with radius 
$$R=\sqrt{\frac{2\sigma^2}{|\theta|}(1-e^{-2|\theta| t})}.$$
For $t\rightarrow \infty$ the density converges to a semicircle with radius $\sigma \sqrt{\frac{2}{|\theta|}}$. The case $\theta\geq0$ is treated in the same way. For more examples we refer to \cite{kargin}.


\section{Free It\^{o}-Formula in Functional Form}\label{sec:freeItoFuncForm}

The proof of the free It\^{o} formula in functional form \cite[Proposition 4.3.4]{Biane1998}  is done by first formulating the It\^{o} product rule for polynomials and then taking appropriate limits to operator valued functions with certain properties. Perturbation theory of operator valued functions has been intensely developed in the past decades (\cite{Skripka2019}). For functions with certain properties, which will be defined below, it is possible to give a Taylor approximation with appropriate remainder term \cite[Chapter 5.4]{Skripka2019} and derive  \cite[Proposition 4.3.4]{Biane1998} from such expansions. Let $W_n(\R)$ be  the set of functions $f\in C^n(\R)$, such that the $k$-th derivative $f^{(k)}, \, k=0,\dots,n$ is the Fourier transform of a finite measure $m_f$ on $\R$. At this point we apply the results in \cite[Corollary 5.8]{azamov_carey_dodds_sukochev_2009} which allow to apply Taylor's formula to $f\in W_n(\R)$.  Note that $f$ can be taken from more general spaces (see \cref{rem:besov}), but for our case $W_n(\R)$ is sufficient. We now derive the free It\^{o}-formula. Consider $[r,t]\subseteq\R, r\geq 0$ divided into $n$ intervals. Write
\begin{equation}
	f(X_t)-f(X_r) = \sum_{k=0}^{n-1}f(X_{i+1})-f(X_i)
\end{equation}
Applying the Taylor series expansion \cite[Corollary 5.8]{azamov_carey_dodds_sukochev_2009}, then for $f \in W_3(\mathbb{R})$ we obtain
\begin{multline}\label{eq:uuu1}
		f(X_{i+1}-X_i) = T_{f^{[1]}}^{X_i,X_i}(\Delta X,\Delta X)  + \\ + T_{f^{[2]}}^{X_i,X_i,X_i}(\Delta X,\Delta X,\Delta X) + O(\|\Delta X\|^3) 
\end{multline}
writing $\Delta X=X_{i+1}-X_i$. The definition of the multiple operator integrals $T_{f^{[1]}}, T_{f^{[2]}}$ is given in
\cite[Definition 4.1]{azamov_carey_dodds_sukochev_2009} and \cite[Lemma 4.5]{azamov_carey_dodds_sukochev_2009}.
Substituting the process $$\Delta X=\int_{t_i}^{t_{i+1}}a(X_s)ds + \int_{t_i}^{t_{i+1}}b(X_s)dW_sc(X_s)$$ and applying the product rule \cite[Theorem 4.1.2]{Biane1998} leads to a simplification of each of the integrals in \cref{eq:uuu1}. Due to the boundness of the corresponding operator valued functions $a,b,c$ with and Burkholder-Gundy inequality (\cite[Theorem 3.2.1]{Biane1998}) the necessary limits can be easily justified. Since this way of deriving the free It\^{o} formula is rather technical in notation, we do not follow this path further in detail. Just as in the classical case, in order to keep the notation as simple as possible, we do apply differential notation instead and make use of the product rules (\ref{freeItoFormulaAsInKargin}). Applying \cite[formula $(14)$ and $(15)$]{azamov_carey_dodds_sukochev_2009} and $dX_t=a_tdt + b_tdW_tc_t$ we obtain
\begin{multline}\label{eq:zzz9}
	df(X_t)=T_{f^{[1]}}^{X_0,X_0}(dX_t) - T_{f^{[2]}}^{X_t,X_0,X_0}(dX_t,dX_t) = \\ =
	T_{f^{[1]}}^{X_0,X_0}(a_tdt) + T_{f^{[1]}}^{X_0,X_0}(b_tdW_tc_t) + T_{f^{[2]}}^{X_t,X_0,X_0}(b_tdW_tc_t,b_tdW_tc_t).
\end{multline}
where $a_t=a(X_t)$ (similar for $b,c$) to shorten the notation. Applying the It\^{o} product rule (\ref{freeItoFormulaAsInKargin}) we proceed by converting each of the derivatives in \cref{eq:zzz9}. The multiple operator integrals
\begin{equation}\label{eq:xxx9}
	T_{f^{[1]}}^{X_0,X_0}(a_tdt) = \int_{\Pi^{[2]}}e^{i(s_0-s_1)X_0} a(X_t)e^{is_1X_0}d\nu_f(s_0,s_1)
\end{equation}
and
\begin{equation}\label{eq:xxxx9}
	T_{f^{[1]}}^{X_0,X_0}(b_tdW_tc_t) = \int_{\Pi^{(2)}} e^{i(s_0-s_1)X_0}\cdot \int_0^tb_sdW_sc_s \cdot e^{is_1X_0}d\nu_f(x_0,s_1).
\end{equation}
are given according to \cite[Definition 4.1]{azamov_carey_dodds_sukochev_2009} and \cite[Lemma 4.5]{azamov_carey_dodds_sukochev_2009}.
The second order derivative in \cref{eq:zzz9} can be simplified by \cref{freeItoFormulaAsInKargin} to
\begin{multline}\label{eq:zzzz9}
	T_{f^{[2]}}^{X_t,X_0,X_0}(b_tdW_tc_t,b_tdW_tc_t) = \\ =
	\int_{\Pi^{(3)}} e^{i(s_0-s_1)X_t} b_t dW_t c_t e^{i(s_1-s_2)X_0} b_t dW_t c_t e^{is_2X_0} d\nu_f(s_0,s_1,s_2) = \\ =
	\int_{\Pi^{(3)}} \varphi(c_t e^{i(s_1-s_2)X_0} b_t)    e^{i(s_0-s_1)X_t} b_t  c_t e^{is_2X_0}dt ~d\nu_f(s_0,s_1,s_2).
\end{multline}
Note that the last integral in \cref{eq:zzzz9} is no longer stochastic. Now we are able to  formulate the
\begin{theorem}[Free It\^{o} Formula in Integral Form]\label{freeItoTheorem} Suppose  $a, b, c$ are continuous functions $\mathcal{A}\rightarrow\mathcal{A}$ in the operator norm such that $a(\mathcal{A}^{sa})\subset\mathcal{A}^{sa}, b(\mathcal{A}^{sa})\subset\mathcal{A}^{sa}, c(\mathcal{A}^{sa})\subset\mathcal{A}^{sa}$. Furthermore $b,c$ are so that the product $b(X_t)dW_tc(X_t)$ is self-adjoint (resp. the sum for $k>1$). Let $(X_t)_{t\geq 0 }$ be a free It\^{o}-process and $X_0 \in \mathcal{A}^{sa}_0$ be a self-adjoint element. Then for functions $f\in W_3(\R)$ it follows that
	\begin{equation}\label{freeItoFormula}
		f(X_t)=f(X_0)+\int_0^t L^0\left[f(X_s)\right]ds + L^1\left[f(X_s)\right]_0^t
	\end{equation}
	where the operators $L^0,L^1:\mathcal{A}^{sa}\rightarrow\mathcal{A}^{sa}$ are introduced as an abbreviation for the expressions
	\begin{equation}\label{freeItoFormula-L0}
		L^0\left[f(X_s)\right] = T_{f^{[1]}}^{X_0,X_0}(a(X_s)) + T_{f^{[2]}}^{X_s,X_0,X_0}(b_sdW_sc_s,b_sdW_sc_s)
%
	\end{equation}
and
	\begin{equation}\label{freeItoFormula-L1}
		L^1[f(X_s)]_0^t = T_{f^{[1]}}^{X_0,X_0}(b_sdW_sc_s).
	\end{equation}
The operator integrals are given by \cref{eq:xxx9,eq:xxxx9,eq:zzzz9}.
\end{theorem}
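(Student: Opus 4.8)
The plan is to make rigorous the formal differential manipulation already sketched in \cref{eq:zzz9,eq:zzzz9} by means of a partition-and-limit argument carried out in the operator norm of $\mathcal{A}$. First I would fix a partition $0=t_0<t_1<\dots<t_n=t$ of $[0,t]$ with mesh $\Delta t=\max_i(t_{i+1}-t_i)$ and write the telescoping sum $f(X_t)-f(X_0)=\sum_{i=0}^{n-1}\bigl(f(X_{t_{i+1}})-f(X_{t_i})\bigr)$. To each summand I would apply the Taylor expansion with remainder from \cite[Corollary 5.8]{azamov_carey_dodds_sukochev_2009}, which is available since $f\in W_3(\R)$, obtaining on each interval the expansion \cref{eq:uuu1}: a first-order multiple operator integral $T_{f^{[1]}}^{X_{t_i},X_{t_i}}(\Delta X_i)$, a second-order term $T_{f^{[2]}}^{X_{t_i},X_{t_i},X_{t_i}}(\Delta X_i,\Delta X_i)$, and a remainder bounded by $O(\|\Delta X_i\|^3)$, where $\Delta X_i=\int_{t_i}^{t_{i+1}}a(X_s)\,ds+\int_{t_i}^{t_{i+1}}b(X_s)\,dW_s\,c(X_s)$.

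The next step is to substitute this increment and sort the resulting terms by their order in $\Delta t$. The key scaling input is the free Burkholder--Gundy inequality \cref{ineq:BurkholdGundy}, which gives $\bigl\|\int_{t_i}^{t_{i+1}}b_s\,dW_s\,c_s\bigr\|=O(\sqrt{t_{i+1}-t_i})$, while the drift increment is $O(t_{i+1}-t_i)$; hence $\|\Delta X_i\|=O(\sqrt{\Delta t})$, so the Taylor remainders sum to $\sum_i O(\|\Delta X_i\|^3)=O\bigl(n(\Delta t)^{3/2}\bigr)=O(\sqrt{\Delta t})\to 0$. In the first-order term I would split $\Delta X_i$ into its drift and stochastic parts: the drift part yields the Riemann sum converging to $\int_0^t T_{f^{[1]}}^{X_0,X_0}(a(X_s))\,ds$, the first summand of $L^0$ in \cref{freeItoFormula-L0,eq:xxx9}, and the stochastic parts assemble, by linearity of $T_{f^{[1]}}^{X_0,X_0}$ in its argument, into the single stochastic term $T_{f^{[1]}}^{X_0,X_0}(b_s\,dW_s\,c_s)$ evaluated between $0$ and $t$, i.e.\ the quantity $L^1$ of \cref{freeItoFormula-L1,eq:xxxx9}. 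In the second-order term only the stochastic$\times$stochastic product survives: the drift$\times$drift piece is $O((\Delta t)^2)$ and the two mixed pieces are $O((\Delta t)^{3/2})$ per interval, hence $O(\sqrt{\Delta t})$ after summation, whereas the surviving piece is reduced by the free It\^o product rule \cref{freeItoFormulaAsInKargin}, which replaces $b_s\,dW_s\,c_s\cdot b_s\,dW_s\,c_s$ by the deterministic expression of \cref{eq:zzzz9} and produces the second summand of $L^0$.

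It then remains to treat the base-point dependence of the multiple operator integrals. The Taylor expansion is anchored at the left endpoint $X_{t_i}$, whereas the target formula \cref{freeItoFormula} anchors the integrals at $X_0$ (and at $X_s$ in the second-order term, in accordance with formulas $(14)$--$(15)$ of \cite{azamov_carey_dodds_sukochev_2009}); this rebasing is in fact what legitimizes the assembly of the stochastic first-order increments into one integral, since only a fixed base point makes $T_{f^{[1]}}^{X_0,X_0}$ genuinely linear in its argument. I would control the rebasing error through the continuity of $T_{f^{[1]}}$ and $T_{f^{[2]}}$ in their base points, combined with the path-continuity bound on $\|X_{t_i}-X_0\|$ coming from \cref{ineq:BurkholdGundy} and the boundedness of $a,b,c$; this error carries an extra power of the increment and sums to $o(1)$. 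Collecting the three surviving contributions, passing to the limit $\Delta t\to0$, and using the operator-norm continuity of $a,b,c$ to identify the Riemann sums with the operator integrals \cref{eq:xxx9,eq:xxxx9,eq:zzzz9}, yields \cref{freeItoFormula}.

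The main obstacle is the uniform control of the Taylor remainder and of the rebasing error: one must combine the operator-norm remainder estimate of \cite[Corollary 5.8]{azamov_carey_dodds_sukochev_2009} with the $O(\sqrt{\Delta t})$ martingale estimate \cref{ineq:BurkholdGundy} so that every term which is neither the drift integral, nor the single stochastic integral, nor the product-rule-reduced second-order integral is genuinely of higher order and disappears. The delicate point is that the stochastic increments scale like $\sqrt{\Delta t}$ rather than $\Delta t$, so the second-order multiple operator integral contributes at exactly the same order as the first-order drift and cannot be discarded; keeping precise track of which pieces survive this anomalous scaling is what the argument hinges on.
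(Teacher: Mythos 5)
The gap is in your third paragraph, the ``rebasing'' step, and it is fatal as stated. Your local Taylor expansions are anchored at the moving base points $X_{t_i}$, while the theorem's formula \cref{freeItoFormula,freeItoFormula-L0,freeItoFormula-L1} is anchored at the fixed point $X_0$ (with one slot at $X_s$ in the second-order term). You claim the discrepancy $T_{f^{[1]}}^{X_{t_i},X_{t_i}}(\Delta X_i)-T_{f^{[1]}}^{X_0,X_0}(\Delta X_i)$ ``carries an extra power of the increment and sums to $o(1)$.'' It does not: by the perturbation identity (formula $(15)$ of \cite{azamov_carey_dodds_sukochev_2009}) this difference equals a second-order operator integral of size $\sim\|X_{t_i}-X_0\|\,\|\Delta X_i\|$, and $\|X_{t_i}-X_0\|$ is $O(1)$, not $O(\sqrt{\Delta t})$ --- it is the distance travelled since time $0$, not the length of one subinterval. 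So each rebasing term is $O(\sqrt{\Delta t})$, the triangle-inequality bound on the sum over $n\sim t/\Delta t$ intervals diverges like $1/\sqrt{\Delta t}$, and even granting $L_2(\varphi)$-orthogonality of the stochastic increments the sum is $O(1)$, never $o(1)$. Indeed it cannot vanish: summing your local expansions \emph{without} rebasing produces, in the limit, the classical-style free It\^{o} formula with all operator integrals evaluated at the running point $X_s$ (essentially \cite[Proposition 4.3.4]{Biane1998}), and the difference between that formula and the $X_0$-anchored formula \cref{freeItoFormula} is precisely the accumulated rebasing contribution. Treating a leading-order, structurally necessary term as a vanishing error is the missing idea; it also shows up in your first-order drift term, which under your scheme would converge to $\int_0^t T_{f^{[1]}}^{X_s,X_s}(a(X_s))\,ds$ rather than to $\int_0^t T_{f^{[1]}}^{X_0,X_0}(a(X_s))\,ds$ as in \cref{eq:xxx9}.

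The fix --- and the route the paper actually takes after sketching and explicitly abandoning the partition argument of \cref{eq:uuu1} --- is to avoid limits for the anchoring altogether: formulas $(14)$ and $(15)$ of \cite{azamov_carey_dodds_sukochev_2009} give the \emph{exact} algebraic identity $f(X_t)-f(X_0)=T_{f^{[1]}}^{X_0,X_0}(X_t-X_0)+T_{f^{[2]}}^{X_t,X_0,X_0}(X_t-X_0,X_t-X_0)$, valid for each $t$ with no remainder and no partition. One then substitutes $X_t-X_0=\int_0^t a_s\,ds+\int_0^t b_s\,dW_s\,c_s$, uses (bi)linearity, and applies the It\^{o} product rule \cref{freeItoFormulaAsInKargin} only to the stochastic-times-stochastic part, which yields \cref{eq:xxx9,eq:xxxx9,eq:zzzz9} and hence the theorem; this is also the only way to obtain the mixed base points $(X_s,X_0,X_0)$ in \cref{freeItoFormula-L0}, which a telescoping scheme with local anchors never produces. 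Your scaling analysis of the Taylor remainders and of the drift-times-drift and mixed second-order pieces is sound, but it belongs to a proof of the $X_s$-anchored (Biane) formula, not of the statement at hand.
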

\begin{remark}\label{rem:besov}
	The function $f$ in \cref{freeItoTheorem} can be taken from the Besov space $\mathcal{B}_{\infty 1}^{n}(\R)$ for which  $W_{n}(\R)\subset \mathcal{B}_{\infty 1}^{n}(\R)$. For a definition of $\mathcal{B}_{\infty 1}^{n}(\R)$ we refer to \cite[pp. 9]{Skripka2019}. For the purpose of this paper it is sufficient to consider $W_n(\R)$.
\end{remark}
\section{Free analog of Euler-Maruyama Method (fEMM)}
\label{sec:fEMM}
We are now going to define a method for the numerical solution of the fSDE  (\ref{intro-freeSDE-diffform}). For simplicity we assume $d=1$ in the following. Consider the free It\^{o} process (\ref{intro-def-freeIto}) over the time interval of length $\Delta t$,
\begin{equation}\label{eq:h123}
	X_{t+\Delta t}=X_t + \int_t^{t+\Delta t}a(X_s)ds +\int_t^{t+\Delta t}b(X_s)dW_sc(X_s).
\end{equation}
Assuming $a,b,c\in W_3(\R)$ we can apply the free It\^{o} Formula (\ref{freeItoFormula}) for $f=a,b,c$ in \cref{eq:h123}. This yields an iterated free It\^{o} formula which allows to motivate and define a free analog of the Euler-Maruyama method. Using the abbreviations $a(X_t)=a_t$ (similar notation for $b,c$) and $t_1=t+\Delta t$ we obtain
\begin{multline}\label{eq:h1231}
	X_{t_1}-X_t=\int_t^{t_1} a_t ds+
	\int_t^{t_1}\int_t^s L^0[a_u]du\,ds+\int_t^{t_1}L^1[a_u]_t^{s}ds + \\
	\int_t^{t_1}\left\{\left( b_t+\int_t^sL^0[b_u]du+L^1[b_u]_t^{s}) \right)dW_s\left( c_t+\int_t^sL^0[c_u]du+L^1[c_u]_t^{s}) \right)\right\}
\end{multline}
Since $a_t,b_t,c_t$ do not depend on the integration variable $s$, we rewrite \cref{eq:h1231} as
\begin{equation}\label{eq:h4}
	X_{t_1}-X_t=a_t\Delta t+b_t(W_{t_1}-W_t)c_t + \rho,
\end{equation}
with the remainder
\begin{multline}\label{eq:h5}
	\rho = \int_t^{t_1}\int_t^s L^0[a_u]du\,ds+ \int_t^{t_1}L^1[a_u]_t^sds + \\
	+\int_t^{t_1} b_t dW_s\left( \int_t^sL^0[c_u]du+L^1[c_u]_t^{s} \right)+\\
	+\int_t^{t_1}\left(\int_t^sL^0[b_u]du\right) dW_s\left( c_t+\int_t^sL^0[c_u]du+L^1[c_u]_t^{s} \right)+\\
	+\int_t^{t_1}\left(L^1[b_u]_t^{s}\right)dW_s\left( c_t+\int_t^sL^0[c_u]du+L^1[c_u]_t^{s} \right).
\end{multline}
By the boundedness and continuity  of the involved functions $a,b,c$ the above integrals are well defined.
In the case $d>1$ we can simply put the sum-sign in front of each integral in $\rho$ which contains either $b$ or $c$. The free Euler-Maruyama method can now be motivated from \cref{eq:h4} by simply skipping the remainder $\rho$.
\begin{definition}[fEMM]\label{freeEM-Definition}
	Given $T>0$, consider a partition of $[0,T]$ into $L\in\N$ intervals $[t_{k-1},t_{k}],k=1,\dots,L$ with constant step size $\Delta t=\frac{T}{L}$.
	 Define the one-step free Euler-Maruyama approximation (fEMM) $\oX_k$ of the solution $X_{t}$ of \cref{intro-freeSDE-diffform} on $[0,T]$ by
	\begin{equation}\label{kap2-def-freeEM}
		\oX_{k+1}=\oX_{k}+a(\oX_{k})\Delta t+b(\oX_{k})\Delta W_{k}c(\oX_{k}),\,\,\, 	k=0,1,\dots,L-1
	\end{equation}
	with starting value $X_0=\oX_0\in\mathcal{A}^{sa}$ and $\Delta W_{k}=W_{k+1}-W_{k}$. $\oX_k$ denotes the numerical approximation to $X_t$ at timepoint $t_k$. 
\end{definition}
In general the fSDE and fEMM act in a finite unital faithful von Neumann algebra $\mathcal{A}$. For the implementation on a computer it is necessary to consider fEMM in the von Neumann algebra $\mathcal{M}_N^{sa}(\R)$ of random matrices (\cref{sec:FreeProbaTheo}). This leads to the situation shown in \cref{fig:freeEM-diagramm}. Given a solution $X_t\in\mathcal{A}^{sa}$ of the fSDE (\ref{intro-freeSDE-diffform}) at $t=k\Delta t\in[0,T],\,k\in\{0,\dots,L-1\}$. Applying fEMM in $\mathcal{A}^{sa}$ we obtain an approximation $\oX_k\in\mathcal{A}^{sa}$. Considering the implementation of fEMM on a computer, we obtain an approximation $\oX_k^N\in\mathcal{M}_N^{sa}(\R)$ to $\oX_k$. To judge the quality of the approximation we have to consider two limits, one by the dimension $N$ of the random matrix in $\mathcal{M}_N^{sa}(\R)$, the other by the step size $\Delta t \rightarrow 0$.
\begin{figure}[ht]
	\centering
	\begin{tikzpicture}
		\matrix (m) [matrix of math nodes,row sep=6em,column sep=6em,minimum width=4em]
		{
			X_{t}\in \mathcal{A}^{sa} & \oX_k \in \mathcal{A}^{sa} \\
			X_{t}^N \in \mathcal{M}_N^{sa}(\R) & \oX_k^N \in \mathcal{M}_N^{sa}(\R) \\};
		\path[-stealth]
		(m-2-1) edge node [left] {$N\rightarrow \infty$} (m-1-1)
		(m-2-2) edge node [right] {$N\rightarrow \infty$} (m-1-2)
		(m-2-2) edge node [below] {$\Delta t \rightarrow 0$} (m-2-1)
		edge [dashed,->] (m-1-1)
		(m-1-2) edge node [above] {$\Delta t \rightarrow 0$} (m-1-1);
	\end{tikzpicture}
	\caption{Diagram of the approximation scheme of fEMM. Implementation of \ref{freeEM-Definition} is realized in $\mathcal{M}_N^{sa}(\R)$ (bottom right), which gives an approximation to the solution $X_t$ of \cref{intro-freeSDE-diffform} (top left). The limits $N\rightarrow\infty$ according to the size of random matrices and the step size limit $\Delta t$ do commute and give convergence of $\oX_k^N$ in distribution to $X_t\in\mathcal{A}^{sa}$.}
	\label{fig:freeEM-diagramm}
\end{figure}
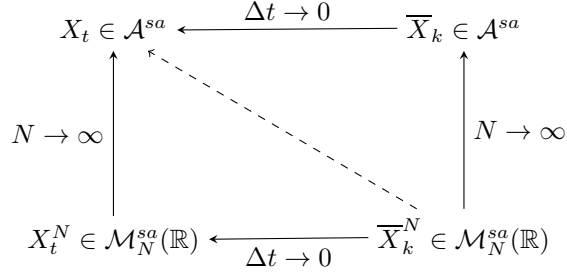

Fix $N \in \N$, $k\in\{0,\dots,L-1\}$. Consider the element $\oX_k^N$ in \cref{fig:freeEM-diagramm} (right bottom). Due to the strong convergence of fEMM (\cref{thm1}) we deduce the existence of the element $X_t^N\in\mathcal{M}_N^{sa}(\R)$ at $t=k\Delta t$ and with \cref{lemma:ChaucyPointwise} the strong convergence implies convergence in distribution to $X_t^N$ which is a solution of \cref{intro-freeSDE-diffform} in $\mathcal{M}_N^{sa}(\R)$.
For each $t\in[0,T]$ we get a sequence $(X_t^N)_N$ in $\mathcal{M}_N^{sa}(\R)$.
Convergence to an element $X_t\in\mathcal{A}^{sa}$ follows by \cite[Exercise 25]{TaoBlog}. Due to \cite[Theorem 4.4.1]{voicudykemanica} free stochastic calculus, stochastic integrals and the free It\^{o}-formula can be viewed as large $N$  limit of stochastic calculus with respect to $N\times N$ hermitian matrices. Then  limit $N\rightarrow\infty$ converge in distribution to the solution of \cref{intro-freeSDE-diffform}. By the same arguments we can first take the large $N$ limit first followed by $\Delta t \rightarrow 0$. 
\section{Convergence Results}
\label{sec:main}
This sections gives two theorems regarding strong and weak convergence properties of fEMM. The results will be numerically verified in \cref{sec:Examples}.
\subsection{Strong convergence of fEMM}\label{subsec:strongconverg}
\begin{definition}\label{def-strong-converg-fEMM}
	The numerical approximation fEMM (\cref{freeEM-Definition}) is said to converge strongly to the solution $X_t$ of \cref{intro-freeSDE-diffform} with order $p>0$, if there is a constant $C>0$ independent of $\Delta t$, so that
	\begin{equation}
		\sup_{0\leq t_k\leq T}\varphi\left(\left|\oX_{k}-X_k\right|\right)\leq C (\Delta t)^p.
	\end{equation}
	for any fixed time point $t_k=k\Delta t\in [0,T], \, k=0,\dots,L$. $X_k$ denotes the solution $X_t$ evaluated at $t_k$ and $X_k=X(t_k)$. At $t=0$ we have $X(0)=X_0=\oX_0$.
\end{definition}
\begin{theorem}\label{thm1}
	Consider the fSDE (\ref{intro-freeSDE-diffform}) and $L_a>0$. Let $a:\mathcal{A}\rightarrow \mathcal{A}$ be an operator function with $a(\mathcal A^{sa})\subset \mathcal A^{sa}$. Additionally let the function $a$ be operator Lipschitz in $L_2(\varphi)$, i.e.
		\begin{displaymath}
			\left\| a(X)-a(Y)\right\|_2 \leq L_a\|X-Y\|_2,
		\end{displaymath} for arbitrary elements $X,Y\in\mathcal{A}^{sa}$.
 Analog conditions hold for functions $b$ and $c$. Then the fEMM approximation (\ref{freeEM-Definition}) has strong convergence order of $p=\frac{1}{2}$, i.e.
	\begin{equation}
		\sup_{0\leq t_k\leq T}\E\left(\left|\oX_k-X_k\right|\right)\leq C (\Delta t)^\frac{1}{2}.
	\end{equation}
	The constant $C$ is independent of step size $\Delta t$.
\end{theorem}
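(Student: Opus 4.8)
The plan is to reduce everything to the $L_2(\varphi)$-norm and then run a Gronwall argument on a continuous-time interpolation of the scheme. Since $\varphi(|Z|)=\|Z\|_1\le\|Z\|_2$ by the Cauchy--Schwarz inequality for the trace, it suffices to prove $\sup_{0\le t_k\le T}\|\oX_k-X_k\|_2\le C(\Delta t)^{1/2}$. To exploit the martingale structure I would first introduce the piecewise interpolation $\oX(t)=\oX_k+a(\oX_k)(t-t_k)+b(\oX_k)(W_t-W_{t_k})c(\oX_k)$ for $t\in[t_k,t_{k+1}]$, together with the step time $\kappa(t)=t_k$ on $[t_k,t_{k+1})$, so that both $\oX(t)$ and the exact solution $X_t$ are genuine free It\^o processes and their difference $e(t)=\oX(t)-X_t$ satisfies
\begin{equation*}
e(t)=\int_0^t\big[a(\oX_{\kappa(s)})-a(X_s)\big]ds+\int_0^t\big[b(\oX_{\kappa(s)})dW_sc(\oX_{\kappa(s)})-b(X_s)dW_sc(X_s)\big].
\end{equation*}
Applying $\|u+v\|_2^2\le 2\|u\|_2^2+2\|v\|_2^2$ splits the analysis into a drift and a diffusion part and, crucially, sidesteps any drift--diffusion cross terms.

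Two ingredients drive the estimate. First I would establish a priori $L_2(\varphi)$ moment bounds $\sup_{s\le T}\|X_s\|_2<\infty$ and $\sup_k\|\oX_k\|_2\le M$ uniformly in $\Delta t$; these follow from the linear growth $\|a(X)\|_2\le\|a(0)\|_2+L_a\|X\|_2$ implied by the operator-Lipschitz hypothesis, a discrete Gronwall inequality, and the vanishing of the cross term $\varphi(\oX_k\,b(\oX_k)\Delta W_kc(\oX_k))=0$ (by traciality and freeness of $\Delta W_k$ from $\mathcal W_{t_k}$ together with $\varphi(\Delta W_k)=0$). Second, the drift part is controlled by the Cauchy--Schwarz/Jensen inequality, giving a bound by $TL_a^2\int_0^t\|\oX_{\kappa(s)}-X_s\|_2^2\,ds$, while the diffusion part is controlled by the free It\^o $L_2(\varphi)$ isometry $\|\int_\tau^t\beta_s dW_s\gamma_s\|_2^2=\int_\tau^t\|\beta_s\|_2^2\|\gamma_s\|_2^2\,ds$ (\cref{lemma:L2Abschaetzung}). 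Writing the diffusion integrand difference as $[b(\oX_\kappa)-b(X_s)]dW_sc(\oX_\kappa)+b(X_s)dW_s[c(\oX_\kappa)-c(X_s)]$ and applying the isometry term by term, the Lipschitz property and the moment bound $M$ yield a bound by $C\int_0^t\|\oX_{\kappa(s)}-X_s\|_2^2\,ds$.

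It then remains to relate $\|\oX_{\kappa(s)}-X_s\|_2$ back to $\|e(s)\|_2$. The one-step discretisation error $\|\oX(s)-\oX_{\kappa(s)}\|_2$ is $O(\sqrt{\Delta t})$: its drift contribution is $O(\Delta t)$ and its diffusion contribution $\|b(\oX_\kappa)(W_s-W_{\kappa(s)})c(\oX_\kappa)\|_2=\|b(\oX_\kappa)\|_2\|c(\oX_\kappa)\|_2\sqrt{s-\kappa(s)}$ is $O(\sqrt{\Delta t})$ again by the isometry and the moment bound. Hence $\|\oX_{\kappa(s)}-X_s\|_2^2\le 2\|e(s)\|_2^2+O(\Delta t)$, and combining the drift and diffusion estimates produces the integral inequality
\begin{equation*}
\|e(t)\|_2^2\le C_1\int_0^t\|e(s)\|_2^2\,ds+C_2\,\Delta t .
\end{equation*}
Gronwall's lemma gives $\sup_{t\le T}\|e(t)\|_2^2\le C_2\Delta t\,e^{C_1T}$, whence $\sup_k\|\oX_k-X_k\|_2=O(\sqrt{\Delta t})$ and, via $\|\cdot\|_1\le\|\cdot\|_2$, the claimed order $p=\tfrac12$.

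I expect the main obstacle to be the correct treatment of the diffusion term. A naive triangle-inequality bound on the accumulated stochastic increments, $\|\sum_k B_k\|_2\le\sum_k\|B_k\|_2$ with $B_k$ the stochastic increment on $[t_k,t_{k+1}]$, would scale like $(\Delta t)^{-1}\cdot\sqrt{\Delta t}=(\Delta t)^{-1/2}$ and blow up; it is essential to use the free It\^o isometry, which encodes the orthogonality of increments over disjoint intervals and is exactly what converts the sum into $(\sum_k\|B_k\|_2^2)^{1/2}=O(1)$. The secondary technical points are the uniform-in-$\Delta t$ moment bounds and verifying that the local error enters at order $\Delta t$ (not $\sqrt{\Delta t}$) after squaring, so that Gronwall yields the sharp exponent $\tfrac12$.
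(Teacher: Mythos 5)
Your proposal is correct in substance and rests on the same pillars as the paper's proof of \cref{thm1}: the reduction $\varphi(|Z|)\le \|Z\|_2$, the free It\^{o} $L_2(\varphi)$ isometry for stochastic integrals, the splitting of the diffusion difference as $(\ob_s-b_s)dW_s\oc_s + b_sdW_s(\oc_s-c_s)$ followed by the Lipschitz hypothesis, and a Gronwall argument on the squared $L_2(\varphi)$ error. The structural difference is the choice of interpolant. The paper uses the piecewise-constant step process $\oX(t)=\oX_k$ and compares it directly with $X_t$, so the error decomposes into two Lipschitz terms that feed Gronwall plus two tail integrals over $[t_{n_t},t]$ which are $O(\Delta t)$ after squaring (see \cref{eq:thirdintegral} and \cref{eq:fourthintegral}); no one-step lemma is needed. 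You instead use the continuous It\^{o} interpolant, whose error process solves a genuine free It\^{o} equation, and must therefore add the one-step estimate $\|\oX(s)-\oX_{\kappa(s)}\|_2^2=O(\Delta t)$ in order to convert $\|\oX_{\kappa(s)}-X_s\|_2^2$ into $2\|e(s)\|_2^2+O(\Delta t)$. This is the Higham--Mao-style variant of the same argument; the paper's version is slightly more economical, yours makes the martingale structure of the error more visible. Both give the sharp rate $p=\tfrac12$.

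One caveat, which concerns a step you flag explicitly and the paper glosses over: both arguments need $\sup_{s\le T}\|X_s\|_2<\infty$ and, \emph{uniformly in} $\Delta t$, $\sup_k\|\oX_k\|_2\le M$. The paper buries this in the constants $K$ and $C_4$ with only the remark that $X(s),\oX(s)\in\mathcal{A}^{sa}$, which gives finiteness for each fixed $\Delta t$ but not uniformity, so your explicitness is an improvement. However, your sketched derivation of the scheme's moment bound (separate linear growth of $b,c$ plus a discrete Gronwall inequality) does not close as stated in the free setting: the bilinear diffusion gives $\|b(\oX_k)\Delta W_k c(\oX_k)\|_2^2=\Delta t\,\|b(\oX_k)\|_2^2\|c(\oX_k)\|_2^2$, which under linear growth of each factor is \emph{quartic} in $\|\oX_k\|_2$, so the resulting recursion is of Riccati type and need not remain bounded over an arbitrary fixed horizon $T$. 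To make that step rigorous you should either assume a joint growth condition $\|b(X)\|_2\|c(X)\|_2\le C(1+\|X\|_2)$, or assume $b,c$ uniformly bounded in $\mathcal{A}$ (as the paper's weak-convergence section does), or invoke the operator-norm control of the exact and approximate solutions coming from the existence theory together with the free Burkholder--Gundy inequality \eqref{ineq:BurkholdGundy}.
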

\begin{remark} In section \cref{sec:fEMM} we mentioned, that for the implementation on a computer we use fEMM in $\mathcal{A}^{sa}=\mathcal{M}_N^{sa}(\R)$. The definition of fEMM \cref{freeEM-Definition}, the strong convergence property of $p=\frac{1}{2}$ and weak order of convergence $p=1$ (see section \cref{subsec:weakconvergence}) can be directly carried over to the von Neumann algebra of $N\times N$ random matrices $\mathcal{M}^{sa}_N(\R)$.
\end{remark}
The proof of \cref{thm1} closely follows the proof of strong convergence of the Euler-Maruyama method for commutative stochastic differential equations, see \cite{HighamKloeden2020}. The differences lie in estimating the free stochastic integrals in $L_2(\varphi)$ (see \cref{lemma:L2Abschaetzung}).
\begin{proof}[Proof of \cref{thm1}]
From the fEMM approximation $\oX_k$ at the time point $t_k, k=1,\dots L$ we define a step process $\oX(t)=\oX_k$ for $t_{k-1}\leq t < t_{k}$. We use the short notion 
\begin{displaymath}
	\oX(s)=\oX_s, X(s)=X_s, a(X(s))=a_s, a(\oX(s))=\oa_s.
\end{displaymath}
Analog for $b,c$. Consider a point $t\in[0,T]$. Let $n_t\in\N$ such that $t\in[t_{n_t},t_{n_t+1}[$. Then 
\begin{multline}\label{eq:help1}
  \oX_t-X_t=\oX_{n_t}-X_t=
	\oX_{n_t}-\left(X_0+\int_0^t a(X_s)ds + \int_0^t b(X_s)dW_sc(X_s)\right)=\\
	=\sum_{k=0}^{n_t-1}(\oX_{k+1}-\oX_k)-\int_0^t a_sds - \int_0^t b_sdW_sc_s=\\
	=\sum_{k=0}^{n_t-1}\oa_k\Delta t + \sum_{k=0}^{n_t-1}\ob_k\Delta W_k \oc_k -\int_0^t a_sds - \sum_{k=0}^{n_t-1} \int_{t_k}^{t_{k+1}} b_s dW_s c_s
\end{multline}
Due to the definition of the step-wise process $\oX(t)$ we can reformulate the terms $\oa_k\Delta t$ and $\ob_kdW_s\oc_k\Delta t$ as an integrals as follows. We deduce
\begin{displaymath}
	\oa_k\Delta t = a(\oX_k) \Delta t = a(\oX(t_k)) \Delta t = \int_{t_k}^{t_{k+1}}a(\oX(s))ds=
	\int_{\Delta t} \oa_s ds
\end{displaymath}
and 
\begin{displaymath}
	\ob_kdW_s\oc_k\Delta t = \int_{t_k}^{t_{k+1}}b(\oX(s))dW_sc(\oX(s))ds=
	\int_{\Delta t}\ob_sdW_s\oc_s.
\end{displaymath}
Note that $\oa_s,\ob_s,\oc_s$ are constant over $[t_k,t_{k+1}[$. Continuing from the last line of \cref{eq:help1} we obtain
\begin{displaymath}
		\oX_t-X_t=\int_0^{t_{n_t}}\oa_sds + \int_0^{t_{n_t}} \ob_sdW_s\oc_s
		-\int_0^t a_sds - \int_0^t b_sdW_sc_s
\end{displaymath}
and further
\begin{multline*}
	\oX_t-X_t=\int_0^{t_{n_t}}\left(\oa_s-a_s\right)ds-\int_{t_{n_t}}^ta_sds+\\
	+\int_0^{t_{n_t}} \ob_sdW_s\oc_s - 	\int_0^{t_{n_t}}b_sdW_sc_s
	- \int_{t_{n_t}}^t b_sdW_sc_s.
	\end{multline*}
Then the square of the $L_2(\varphi)$-norm of the difference $\oX_t-X_t$ is
\begin{multline}\label{ineq:hh1}
	\varphi\left(\left|\oX_t-X_t\right|^2\right)= \varphi\left(\left|\int_0^{t_{n_t}}\left(\oa_s-a_s\right)ds-\int_{t_{n_t}}^ta_sds\right.\right.+\\\left.\left.
	+\int_0^{t_{n_t}} \ob_sdW_s\oc_s
	-\int_0^{t_{n_t}}b_sdW_sc_s
	-\int_{t_{n_t}}^t b_sdW_sc_s\right|^2\right).
\end{multline}
By applying the inequality
\begin{displaymath}
	\|X_1+X_2+X_3+X_4\|_2^2\leq 4\left(\|X_1\|_2^2+\|X_2\|_2^2+\|X_3\|_2^2+\|X_4\|_2^2\right)	\end{displaymath}
for $X_1,X_2,X_3,X_4\in\mathcal{A}^{sa}$, we deduce from \cref{ineq:hh1}
\begin{multline}\label{eq:hilf66}
	\varphi\left(\left|\oX_t-X_t\right|^2\right)\leq  4\varphi\left(\left|\int_0^{t_{n_t}}\left(\oa_s-a_s\right)ds\right|^2\right)+
	4\varphi\left(\left|\int_{t_{n_t}}^ta_sds\right|^2\right)+\\
	4\varphi\left(\left|\int_0^{t_{n_t}} \ob_sdW_s\oc_s -  \int_0^{t_{n_t}}b_sdW_sc_s\right|^2\right)+
	4\varphi\left(\left|\int_{t_{n_t}}^t b_sdW_sc_s\right|^2\right).
\end{multline}
Using the abbreviation
\begin{displaymath}
	v(t)=\left\|\oX_t-X_t\right\|_2^2
\end{displaymath}
and applying Jensen's inequality it follows from \cref{eq:hilf66} that
\begin{multline}\label{eq:help55}
	v(t)\leq  4T\int_0^{t_{n_t}}\varphi\left(\left|\left(\oa_s-a_s\right)\right|^2\right)ds+
	4\Delta t \int_{t_{n_t}}^t\varphi\left(\left|a_s\right|^2\right)ds+\\
	4\varphi\left(\left|\int_0^{t_{n_t}} \ob_sdW_s\oc_s - \int_0^{t_{n_t}}b_sdW_sc_s\right|^2\right)+
	4\varphi\left(\left|\int_{t_{n_t}}^t b_sdW_sc_s\right|^2\right).
\end{multline}
Estimating the first integral in (\ref{eq:help55}) gives
\begin{multline}\label{eq:firstintegral}
	\int_0^{t_{n_t}}\varphi\left(\left|\left(\oa_s-a_s\right)\right|^2\right)ds =
	\int_0^{t_{n_t}}\|\oa_s-a_s\|_2^2ds \leq 
	\int_0^{t_{n_t}} L_a^2 \|\oX_s-X_s\|_2^2ds =\\
	= L_a^2\int_0^{t_{n_t}} \varphi\left(\left|\oX_s-X_s\right|^2\right)ds
	=L_a^2\int_0^{t_{n_t}} v(s) ds.
\end{multline}
The second integral in (\ref{eq:help55}) is an $O(\Delta t)$, since
\begin{multline}\label{eq:secondintegral}
	 \int_{t_{n_t}}^t\varphi\left(\left|a_s\right|^2\right)ds =
	 \int_{t_{n_t}}^t \|a_s\|_2^2 ds \leq C_a\int_{t_{n_t}}^t (1+\|X_s\|_2^2) ds\leq \\ \leq C_1(t-t_{n_t}) \leq C_1 \Delta t.
\end{multline}
The constant $C_1\in\R$ does not depend on $\Delta t$. The third integral in (\ref{eq:help55}) is estimated as follows.
\begin{multline}\label{eq:thirdintegral}
	\varphi\left(\left|\int_0^{t_{n_t}} \ob_sdW_s\oc_s -  \int_0^{t_{n_t}}b_sdW_sc_s\right|^2\right)=\\
	\leq \varphi\left(\left|\int_0^{t_{n_t}} (\ob_s-b_s)dW_s\oc_s\right|^2 +  \left|\int_0^{t_{n_t}}b_sdW_s(\oc_s-c_s)\right|^2\right) =\\
	=\left\| \int_0^{t_{n_t}} (\ob_s-b_s)dW_s\oc_s \right\|_2^2+\left\|\int_0^{t_{n_t}}b_sdW_s(\oc_s-c_s)\right\|_2^2 
\end{multline}
Applying to the $L_2(\varphi)$ isometry of the stochastic integral, the Lipschitz conditions on $a,b,c$ and the Cauchy-Schwarz inequality we continue from the last line of \cref{eq:thirdintegral} to get

\begin{equation}
	\varphi\left(\left|\int_0^{t_{n_t}} \ob_sdW_s\oc_s -  \int_0^{t_{n_t}}b_sdW_sc_s\right|^2\right) \leq K\int_0^{t_{n_t}} \|\oX(s)-X(s)\|_2^2 ds = K\int_0^{t_{n_t}} v(s)ds.
\end{equation}
The constant $K$ depends on the Lipschitz constants $L_b, L_c$ and the $L_2(\varphi)$ norm of $\|X(s))\|_2^2$ and $\|\oX(s))\|_2^2$ which are uniformly bounded since $X(s)\in\mathcal{A}^{sa}$ and $\oX(s)\in\mathcal{A}^{sa}$. The constant $K$ does not depend on $\Delta t$. The last stochastic integral in (\ref{eq:help55}) is handled by the $L_2(\varphi)$ isometry of the stochastic integral, i.e.
\begin{multline}\label{eq:fourthintegral}
	\varphi\left(\left|\int_{t_{n_t}}^t b_sdW_sc_s\right|^2\right) = 
	\int_{t_{n_t}}^t \|b_s\|_2^2\|c_s\|_2^2ds \leq \\
	\leq	C_3\int_{t_{n_t}}^t \left(1+\left\|X(s)\right\|_2^2\right)^2 ds \leq 
	C_4(t_{n_t}-t)\leq C_4\Delta t.
\end{multline}
Again, since $\sup\limits_{s\in[0,T]}\|X(s)\|_2<\infty$ by definition, we have $C_4<\infty$ and does not depend on $\Delta t$.
Inserting (\ref{eq:firstintegral}), (\ref{eq:secondintegral}), (\ref{eq:thirdintegral}), (\ref{eq:fourthintegral}) into (\ref{eq:help55}) yields
\begin{displaymath}
	v(t) \leq 4(TL_a^2+K)\int_0^{t_{n_t}}v(s)ds+4C_1 L_a\Delta t^2+4C_4\Delta t
\end{displaymath}
For $\Delta t$ small enough $v(t)$ fulfills the inequality
\begin{displaymath}
	v(t) \leq D\Delta t + E\int_0^{t_{n_t}}v(s)ds.
\end{displaymath}
The Gronwall inequality implies
\begin{displaymath}
	 v(t)\leq F \Delta t, F<\infty,\, t\in[0,T].
\end{displaymath}
The supremum of the $L_1(\varphi)$-norm over $[0,T]$ of the error $\overline{X}(t)-X(t)$ is first estimated by
\begin{displaymath}
	\sup_{0\leq s\leq T}\varphi(|\oX(s)-X(s)|) \leq \sup_{0\leq s\leq T}\varphi(|\oX(s)-X(s)|^2)^{\frac{1}{2}} \leq \sqrt{F}\sqrt{\Delta t}.
\end{displaymath}
Since $\oX(t_k)=\oX_k$ for all $0\leq t_k\leq T$ we have
\begin{displaymath}
	\sup_{0\leq t_k\leq T}\varphi(|\oX_k-X_k|) \leq C\sqrt{\Delta t}.
\end{displaymath}
\end{proof}

\subsection{Weak convergence of fEMM}\label{subsec:weakconvergence}

The main content of this section is \cref{weak-main-thm}, which states weak convergence of order $p=1$ under cetain assumption on the coefficient functions $a,b,c$. First we give the definition of weak convergence in the context of fSDEs. To prove \cref{weak-main-thm} we need some preparatory statements. At first, \cref{lem:weak:onestep} states that the expectation of the remainder in \cref{eq:h5} of the iterated It\^{o} formula (\ref{eq:h4}) is  $O(\Delta t^2)$. This allows to formulate  \cref{thm:weakonestep}, which states weak order $p=2$ for one single fEMM step. It is then possible to take over the proof in \cite[Theorem 2.2.1]{milsteintretyakov} to obtain the desired result of weak convergence order $p=1$. \\
In the sequel, we use the abbreviations $\Delta = X_{t+\Delta t} - X_t$ and $\oDelta=\overline{X}_{t+\Delta t}-X_t$. Note that  
$
	\rho = X_{t+\Delta t}-\overline{X}_{t+\Delta t} = 
	(X_{t+\Delta t}-X_t)-(\overline{X}_{t+\Delta t}-X_t) =\Delta-\oDelta.
$
\begin{definition}\label{dek:weakconv}
	The numerical approximation fEMM defined by (\ref{kap2-def-freeEM}) is said to converge weakly to the solution $X_t$ of (\ref{intro-freeSDE-diffform}) with order $p>0$, if there is a constant $C_{f}>0$ independent of $\Delta t$, so that for $f$ from a sufficiently large class of functions
	\begin{equation}
		\sup_{0\leq t_k\leq T}\left| \varphi \left(f\left(X_{k} \right)\right) - \varphi\left(f\left(  \oX_k\right)\right)\right|\leq C_{f,T} (\Delta t)^p
	\end{equation}
	as $\Delta t \rightarrow 0$.
\end{definition}
We start with
\begin{lemma}\label{lem:weak:onestep} Consider the free It\^{o} process (\ref{intro-def-freeIto}) over the time interval $[t,t+\Delta t]$. Let $a,b,c\in W_3(\R)$ and uniformly bounded in $\mathcal{A}$. Then there is a constant $K>0$ independent of $\Delta t$ such that the following inequality holds,
\begin{equation}\label{eq:statement:h1}
	\left|\E(\rho)\right| =\left|\E(\Delta - \oDelta)\right|\leq K\Delta t^2.
\end{equation}
\end{lemma}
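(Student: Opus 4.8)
The plan is to apply the trace $\E$ directly to the five summands of the remainder $\rho$ in \cref{eq:h5} and to show that only the first, purely deterministic, double integral survives, the other four contributing exactly zero. The governing principle is that $\E$ annihilates every free stochastic integral built over $[t,t_1]$: if $M=\int_t^s b_u dW_u c_u$ is a free It\^o integral with adapted integrands and $A\in\mathcal{W}_t$, then by traciality $\E(AM)=\E(MA)$, and writing $M$ through simple integrands $\sum_j b_{u_j}(W_{u_{j+1}}-W_{u_j})c_{u_j}$, each contribution equals $\E\big((W_{u_{j+1}}-W_{u_j})\,c_{u_j}A b_{u_j}\big)$ with $c_{u_j}Ab_{u_j}\in\mathcal{W}_{u_j}$. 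Since the increment $W_{u_{j+1}}-W_{u_j}$ is free from $\mathcal{W}_{u_j}$ and centered, freeness gives $\E\big((W_{u_{j+1}}-W_{u_j})Y\big)=\E(W_{u_{j+1}}-W_{u_j})\,\E(Y)=0$. Thus $\E(AM)=0$ for every $A\in\mathcal{W}_t$, which is the martingale/centering property I will repeatedly invoke.

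With this in hand I would dispatch the four stochastic terms as follows. The third, fourth and fifth summands of \cref{eq:h5} are themselves free It\^o integrals $\int_t^{t_1}H_s\,dW_s K_s$ whose integrands $H_s,K_s$ (namely $b_t$, $\int_t^s L^0[b_u]du$, $L^1[b_u]_t^s$ and the corresponding $c$-factors) are adapted to $\mathcal{W}_s$; hence $\E$ of each vanishes by the above. For the second summand $\int_t^{t_1}L^1[a_u]_t^s\,ds$ I would first interchange $\E$ with the outer $ds$-integral and with the operator integral $\int d\nu_a$ defining $L^1=T_{a^{[1]}}^{X_t,X_t}(\cdot)$ in \cref{eq:xxxx9}; using cyclicity of the trace, $\E\big(e^{i(s_0-s_1)X_t}M e^{is_1X_t}\big)=\E\big(e^{is_0X_t}M\big)$ with $M=\int_t^s b_u dW_u c_u$, and since $e^{is_0X_t}\in\mathcal{W}_t$ the martingale property yields $\E(L^1[a_u]_t^s)=0$. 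Consequently $\E(\rho)=\E\big(\int_t^{t_1}\int_t^s L^0[a_u]\,du\,ds\big)$.

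It then remains to bound the surviving deterministic term. Because $a,b,c$ are uniformly bounded in $\mathcal{A}$, the exponentials $e^{isX}$ are unitary, and $\nu_a$ has finite total variation for $a\in W_3(\R)$, the operator $L^0[a_u]$ defined through \cref{freeItoFormula-L0,eq:xxx9,eq:zzzz9} is bounded uniformly in $u$, say $\|L^0[a_u]\|\le C$. Using $|\E(Y)|\le\|Y\|$ and estimating the nested integral,
\[
|\E(\rho)|\le\Big\|\int_t^{t_1}\!\!\int_t^s L^0[a_u]\,du\,ds\Big\|\le\int_t^{t_1}\!\!\int_t^s\|L^0[a_u]\|\,du\,ds\le C\,\frac{(\Delta t)^2}{2},
\]
which is the asserted $O(\Delta t^2)$. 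As an independent check one may note that, applying $\E$ termwise to \cref{eq:h4}, the same two facts give $\E(\rho)=\int_t^{t_1}\big(\E(a(X_s))-\E(a(X_t))\big)\,ds$, and one further application of \cref{freeItoTheorem} to $f=a$ shows the integrand is $O(\Delta t)$.

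The main obstacle I anticipate is the rigorous justification of the vanishing traces for the $L^1$-type term: one must legitimately commute $\E$ past the multiple operator integral $\int d\nu_a$ and the $ds$-integration (a Fubini argument using $\|m_a\|<\infty$ together with boundedness of the integrand) before the traciality-plus-freeness argument applies, and one must confirm that all integrands entering the third through fifth summands are genuinely $\mathcal{W}_s$-adapted, so that the zero-trace property of the free It\^o integral is available.
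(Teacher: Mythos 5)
Your proof is correct and takes essentially the same route as the paper: apply the trace to the remainder \cref{eq:h5}, observe that every term containing a stochastic integral vanishes by freeness and centeredness of the Brownian increments, and bound the surviving double integral of $L^0[a_u]$ by $O(\Delta t^2)$ using uniform boundedness of $a,b,c$, unitarity of the exponentials, and finiteness of the measure $\nu_a$. If anything, your explicit martingale argument for the vanishing terms and your uniform operator-norm bound $\|L^0[a_u]\|\leq C$ are cleaner than the paper's version, which only asserts the vanishing and estimates the surviving term through a trace factorization justified by a questionable appeal to ``freeness'' of factors that all involve $X_t$ and $X_u$.
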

\begin{proof}
	Applying the trace $\varphi$ to the iterated It\^{o} formula (\ref{eq:h5}) we have to consider in total 10 integrals (by resolving the brackets). 
	All integrals in (\ref{eq:h5}) except ($t_1=t+\Delta t$)
	\begin{equation}
		\varphi(\rho)=\varphi\left(\int_t^{t_1}\int_t^s L^0[a_u]du\,ds\right)
	\end{equation}
	are zero due to freeness property of the free Brownian motion and zero trace of the stochastic integral.
We start using  \cref{freeItoFormula-L0} and the definition of the multiple operator integrals \cref{eq:xxx9,eq:xxxx9} and proceed as 
\begin{multline}\label{eq:hhh6}
		\left|\varphi\left(\int_t^{t_1}\int_t^s L^0[a_u]du\,ds\right)\right| \leq \\
		\leq \left|\varphi\left(\int_t^{t_1}\int_t^s 
		\int_\Pi e^{i(s_0-s_1)X_t}a(X_u)e^{is_1X_t}d\nu_a(s_0,s_1) du\,ds\right) \right| + \\
		 + \left| \varphi \left(\int_\Pi e^{i(s_0-s_1)X_t}b_uc_u\varphi(c_ub_u)e^{is_2X_t}d\nu_a(s_0,s_1,s_2)
		 du\,ds\right)\right|=I_1+I_2.
\end{multline}
Due to the freeness of the factors in the integrand of $I_1$ we obtain the estimation
\begin{multline*}
		I_1\leq \int_t^{t_1}\int_t^s 
		\int_\Pi \left|\varphi\left( e^{i(s_0-s_1)X_t}a(X_u)e^{is_1X_t}\right) \right|d\nu_a(s_0,s_1)du\,ds = \\ =
		\int_t^{t_1}\int_t^s 
		\int_\Pi \left|\varphi\left( e^{i(s_0-s_1)X_t}\right)\varphi\left(a(X_u)\right)\varphi\left(e^{is_1X_t}\right) \right|d\nu_a(s_0,s_1)du\,ds = \\ =
		\int_t^{t_1}\int_t^s 
		\int_\Pi \left|\varphi\left(a(X_u)\right) \right|d\nu_a(s_0,s_1) \leq K_1\Delta t^2.
\end{multline*}
where $K_1$ independent of $\Delta t$. The last equality follows because $\left|\varphi\left(e^{i(s_0-s_1)X_t}\right)\right|=1$ and $a(X_u)$ is uniformly bounded in $\mathcal{A}$. Furthermore  $(\Pi^{(2)},\nu_a)$ is a finite measure space (\cite{azamov_carey_dodds_sukochev_2009}). The 
second integral $I_2$ in the last line of \cref{eq:hhh6} is estimated as
\begin{multline}\label{ineq:help2}
	\left|\varphi\left(
		\int_t^{t_1} \int_t^s\int_\Pi e^{i(s_0-s_1)X_t}b_uc_u\varphi(c_ub_u)e^{is_2X_t}d\nu_a(s_0,s_1,s_2)du\,ds\right) \right| \leq \\ \leq 
				\int_\Pi \varphi^2(b_uc_u)d\nu_a(s_0,s_1,s_2) \leq K_2\Delta t^2
\end{multline}
The last inequality follows, since $b_u,c_u$ are uniformly bounded in $\mathcal{A}$.
\end{proof}
Now we are fully prepared to formulate and prove
\begin{theorem}\label{thm:weakonestep} Consider one single step of fEMM (see \cref{def}) with start value $X_t\in\mathcal{A}_t^{sa}$ at time point $t\in[0,T]$. Let  $f\in W_2(\R)$. If $a,b,c \in W_3(\R)$ and uniformly bounded in $\mathcal{A}$, then one single fEMM step of size $\Delta t$ with starting value $X_t$ has  weak convergence of order $p=2$,  i.e.
\begin{equation}
		|\varphi(f(X_{t+\Delta t}))  - \varphi(f(\overline{X}_{t+\Delta t}))| \leq K \Delta t^2.
\end{equation}
\end{theorem}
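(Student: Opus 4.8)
The plan is to compare $f(X_{t+\Delta t})$ and $f(\oX_{t+\Delta t})$ through the exact second order operator Taylor formula of \cite[Corollary 5.8]{azamov_carey_dodds_sukochev_2009}, expanded about the \emph{common and adapted} base point $X_t\in\mathcal A_t^{sa}$. With $\Delta=X_{t+\Delta t}-X_t$ and $\oDelta=\oX_{t+\Delta t}-X_t$ this reads, for $f\in W_2(\R)$,
\begin{equation*}
	f(X_{t+\Delta t})=f(X_t)+T_{f^{[1]}}^{X_t,X_t}(\Delta)+T_{f^{[2]}}^{X_{t+\Delta t},X_t,X_t}(\Delta,\Delta),
\end{equation*}
and likewise for $\oX_{t+\Delta t}$ with $\oDelta$ in the arguments and $\oX_{t+\Delta t}$ in the first slot. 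Subtracting, applying $\varphi$, and using linearity of $T_{f^{[1]}}$ together with $\Delta-\oDelta=\rho$ yields
\begin{multline*}
	\varphi(f(X_{t+\Delta t}))-\varphi(f(\oX_{t+\Delta t}))=\varphi\bigl(T_{f^{[1]}}^{X_t,X_t}(\rho)\bigr)\\
	+\varphi\bigl(T_{f^{[2]}}^{X_{t+\Delta t},X_t,X_t}(\Delta,\Delta)\bigr)-\varphi\bigl(T_{f^{[2]}}^{\oX_{t+\Delta t},X_t,X_t}(\oDelta,\oDelta)\bigr).
\end{multline*}
It then suffices to bound the first order term (call it $A$) and the second order difference (call it $B$) separately by $O(\Delta t^2)$.

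For $A$ I would exploit that the base point is adapted. By the kernel representation of $T_{f^{[1]}}$ in \cref{eq:xxx9} and the traciality of $\varphi$, one has $\varphi(T_{f^{[1]}}^{X_t,X_t}(\rho))=\int_{\Pi^{(2)}}\varphi(e^{is_0X_t}\rho)\,d\nu_f(s_0,s_1)$. Since $e^{is_0X_t}\in\mathcal W_t$, the argument in the proof of \cref{lem:weak:onestep} carries over verbatim with this extra adapted factor inserted: each stochastic integral occurring in $\rho$ (see \cref{eq:h5}) has the form $\int_t^{t+\Delta t}(\cdot)\,dW_s(\cdot)$ with adapted integrand, so its trace against $e^{is_0X_t}$ vanishes by freeness of the increments from $\mathcal W_t$ and $\varphi(\Delta W)=0$; only the double time integral $\int_t^{t+\Delta t}\int_t^sL^0[a_u]\,du\,ds$ survives and is $O(\Delta t^2)$ by uniform boundedness of the integrand. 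As $(\Pi^{(2)},\nu_f)$ has finite mass, this gives $|A|\le K_A\Delta t^2$; in this sense $A$ is governed directly by \cref{lem:weak:onestep}.

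For $B$ I would first replace the perturbed points $X_{t+\Delta t},\oX_{t+\Delta t}$ in the first slot by $X_t$. Using $\|e^{i\theta Y}-e^{i\theta Z}\|\le|\theta|\,\|Y-Z\|$ and $\|\Delta\|,\|\oDelta\|=O(\sqrt{\Delta t})$ (from the free Burkholder--Gundy inequality \cref{ineq:BurkholdGundy}), the replacement error is a trace carrying one uncompensated free increment and drops out to leading order, the remainder being $O(\Delta t^2)$. One is then left with $\varphi(T_{f^{[2]}}^{X_t,X_t,X_t}(\Delta,\Delta)-T_{f^{[2]}}^{X_t,X_t,X_t}(\oDelta,\oDelta))$. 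Writing $\Delta=\oDelta+\rho$ and expanding the bilinear form, the pure term $T_{f^{[2]}}^{X_t,X_t,X_t}(\rho,\rho)$ is bounded by $c\,\|\rho\|_2^2=O(\Delta t^2)$, using boundedness of the double operator integral $L_2(\varphi)\times L_2(\varphi)\to L_1(\varphi)$ and the one step strong estimate $\|\rho\|_2=O(\Delta t)$, which follows from the $L_2(\varphi)$ isometry \cref{lemma:L2Abschaetzung} exactly as in the proof of \cref{thm1}. There remain the mixed terms $\varphi(T_{f^{[2]}}^{X_t,X_t,X_t}(\rho,\oDelta))$ and its transpose; with $\oDelta=a_t\Delta t+b_t\Delta W_tc_t$ the $a_t\Delta t$ part is $O(\Delta t)\cdot O(\Delta t)=O(\Delta t^2)$, and the $b_t\Delta W_tc_t$ part must be treated with the free It\^o product rule \cref{freeItoFormulaAsInKargin}.

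The main obstacle is precisely this mixed term. A naive Cauchy--Schwarz bound gives only $\|\rho\|_2\|\oDelta\|_2=O(\Delta t^{3/2})$, which is not enough; reaching $O(\Delta t^2)$ here, as in the base point replacement above, forces one to retain the explicit algebraic form of $\rho$ from \cref{eq:h5}, to write $\Delta W_t=\int_t^{t+\Delta t}dW_r$, and to invoke \cref{freeItoFormulaAsInKargin} so that the putative $O(\Delta t^{3/2})$ contributions are seen to contain an odd number of free increments and therefore to have vanishing trace, exactly the mechanism already used in \cref{lem:weak:onestep}. This is the free counterpart of the classical second moment matching that underlies local weak order two in the commutative theory \cite{milsteintretyakov}. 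Once this estimate is in hand, collecting $|A|+|B|\le K\Delta t^2$ proves the claim.
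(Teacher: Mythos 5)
Your handling of the first-order term $A$ coincides with the paper's own proof: both reduce $\varphi\bigl(T_{f^{[1]}}^{X_t,X_t}(\Delta-\oDelta)\bigr)$ to the estimate $|\varphi(\rho)|\leq K\Delta t^2$ of \cref{lem:weak:onestep}, using finiteness of the measure and the vanishing trace of the stochastic integrals in \cref{eq:h5}. The divergence --- and the gap --- is in the second-order term. Because you chose the exact second-order expansion, you are forced to estimate the \emph{difference} of the two second-order terms, hence to expand bilinearly in $\rho$ and $\oDelta$, and this produces the mixed term $\varphi\bigl(T_{f^{[2]}}^{X_t,X_t,X_t}(\rho,\,b_t\Delta W_tc_t)\bigr)$ as well as a base-point replacement error. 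At both of these points you concede that Cauchy--Schwarz gives only $O(\Delta t^{3/2})$ and appeal to a cancellation (``an odd number of free increments has vanishing trace'') that you do not carry out. That cancellation is precisely the hard content of the step: one would have to substitute the full ten-term expression \cref{eq:h5} for $\rho$, pair each term with $\Delta W_t$ via the product rule \cref{freeItoFormulaAsInKargin}, and check term by term that nothing of order $\Delta t^{3/2}$ survives the trace. As written, the proposal's critical estimate is asserted rather than proved, so the argument is incomplete exactly where it is hardest.

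The paper avoids this difficulty entirely, and you could too: it never compares the two second-order terms against each other. Writing each remainder in integral form, $R_\Delta=\tfrac{1}{2}\int_0^1(1-\tau)\,T_{f^{[2]}}^{X_t+\tau\Delta,X_t+\tau\Delta}(\Delta,\Delta)\,d\tau$, it bounds $|\varphi(R_\Delta)|$ and $|\varphi(R_{\oDelta})|$ \emph{separately} by $O(\Delta t^2)$. The observation you missed is that after applying $\varphi$ and factorizing the kernel integrand, the trace of the remainder is controlled by $|\varphi(\Delta)|^2$ rather than by $\|\Delta\|_2^2$, and $\varphi(\Delta)=\varphi\bigl(\int_{\Delta t}a_s\,ds\bigr)=O(\Delta t)$ because the stochastic integral has zero trace --- one full order better than $\|\Delta\|_2=O(\sqrt{\Delta t})$, and likewise for $\oDelta$. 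With that single observation no cross terms ever arise, no base-point replacement is needed, and the entire second-order contribution is $O(\Delta t^2)$ in a few lines; your route, even if the sketched cancellations can be pushed through, reproves this the hard way.
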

\begin{proof} $\Delta = X_{t+\Delta t}-X_t$. $\overline{\Delta} = \overline{X}_{t+\Delta t}-X_t$
	According to \cite[Corollary 5.8]{azamov_carey_dodds_sukochev_2009} we develop $f$ into a Taylor Series for $\Delta$, resp. $\overline{\Delta}$. Let $f\in W_2(\R)$, then 
\begin{displaymath}
		f(X_{t+\Delta t})=f(\Delta + X_t)=f(X_t) + T_{f^{[1]}}^{X_t,X_t}(\Delta) +  R_{\Delta}
\end{displaymath}
and
\begin{displaymath}
		f(\oX_{t+\Delta t})=f(\oDelta + X_t)=f(X_t) + T_{f^{[1]}}^{X_t,X_t}(\oDelta) +  R_{\oDelta}.
\end{displaymath}
Substracting yields
\begin{equation*}
		f(X_{t+\Delta t})  - f(\overline{X}_{t+\Delta t}) = 	T_{f^{[1]}}^{X_t,X_t}(\Delta-\oDelta)+ R_{\Delta}-R_{\oDelta}.
\end{equation*}
For the remainder we choose the integral form (see \cite[Theorem 1.43]{schwartzNonLinFANA1696})
\begin{displaymath}
		R_\Delta=\frac{1}{2}\int_0^1(1-\tau) T_{f^{[2]}}^{X_t+\tau\Delta, X_t+\tau\Delta}(\Delta, \Delta) d\tau
\end{displaymath}
and analog for $\oDelta$.
Applying the definition of multiple operator integrals (see \cite[Lemma 4.5]{azamov_carey_dodds_sukochev_2009}) and the trace $\varphi$ yields 
\begin{multline}\label{thm:weak:onestep:proof:1stestim}
	\left| \varphi\left(T_{f^{[1]}}^{X_t,X_t}(\Delta-\oDelta)\right)\right|~ = \\
	=\left|\int_{\Pi^{[2]}} \varphi\left( e^{i(s_0-s_1)X_t}(\Delta-\oDelta)e^{i(s_1-s_2)X_t}\right)d\nu(s_0,s_1)\right|= \\
	\left|
	\int_{\Pi^{[2]}} \varphi\left( e^{i(s_0-s_2)X_t}\right)\varphi\left(\Delta-\oDelta\right)d\nu(s_0,s_1)\right| \leq \int_{\Pi^{[2]}} \left|\varphi\left(\Delta-\oDelta\right)\right|d\nu(s_0,s_1) \leq \\ \leq K\Delta t^2 \|m_{f^{(1)}}\| = K_3\Delta t^2,
\end{multline}
due to freeness of the factors in the integrand. The last line follows by \cref{lem:weak:onestep} (for $f\in W_n(\R)$ the measure $m_{f^{(1)}}$ is finite). We turn to the remainder $R_\Delta$. 
\begin{multline}\label{thm:weak:onestep:proof:2ndtestim}
	\left| \varphi\left(R_{\Delta}\right) \right|
	=\left| \int_0^1(1-\tau) \varphi\left(T_{f^{[2]}}^{X_t+\tau\Delta, X_t+\tau\Delta}(\Delta, \Delta) \right)d\tau\right| =\\	
	=\left| \int_0^1(1-\tau) \int_{\Pi^{(3)}} \varphi (e^{i(s_0-s_1)(X_t+\tau\Delta)}) \varphi(\Delta) \right. ...\\ \left. ...\varphi (e^{i(s_1-s_2)(X_t+\tau\Delta)}) \varphi(\Delta)\varphi (e^{i(s_2)(X_t+\tau\Delta)})d\nu_f(s_0,s_1,s_2) d\tau\right| \leq \\	
	\leq C_5\int_0^1(1-\tau) \left| \varphi(\Delta)^2\right|d\tau \leq C_6 \left| \varphi(\Delta)^2\right| = \\
	= C_6\left| \varphi\left(\int_{\Delta t} a_sd_s + \int_{\Delta t}b_sdW_sc_s\right)\right|^2 = C_6 \left| \varphi\left(\int_{\Delta t}a_sds\right) \right|^2 \leq \\
	\leq C_7 \varphi\left(\int_{\Delta t}\left|a_s\right|ds\right)^2 = 
	C_7 \left(\int_{\Delta t}\varphi\left(\left|a_s\right|\right)ds\right)^2 \leq C_8 \Delta t^2
\end{multline}
In similar consideration it follows that 
\begin{equation}\label{thm:weak:onestep:proof:3rdtestim}
	\left| \varphi\left(R_{\oDelta}\right) \right| \leq  C_9 \Delta t^2.
\end{equation}
Collecting \cref{thm:weak:onestep:proof:1stestim,thm:weak:onestep:proof:2ndtestim,thm:weak:onestep:proof:3rdtestim} reveals the statement.
\end{proof}
\begin{theorem}\label{weak-main-thm}
	Let $T>0$ and consider the free Euler Maruyama Method (\ref{kap2-def-freeEM}) with starting value  $X(0)=X_0\in\mathcal{A}^{sa}$. 	Under the assumptions of \cref{thm:weakonestep}
	 the method (\ref{kap2-def-freeEM}) is weakly convergent with order $p=1$, i.e.
	\begin{equation}\label{eq:weak-error-fem-order1}
		\sup_{0\leq t_k \leq T}|\varphi(f(X_{k}))-\varphi(f(X_{k}))|\leq C_{f,T} \Delta t
	\end{equation}
	for all functions $f\in W_2(\R)$.
\end{theorem}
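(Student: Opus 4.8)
The plan is to lift the one-step weak estimate of \cref{thm:weakonestep} to a global estimate by the standard local-error accumulation argument, following \cite[Theorem 2.2.1]{milsteintretyakov}. It suffices to prove the bound at the terminal index $L$ (with $t_L=T$), since the identical reasoning applies with $T$ replaced by any $t_k$ and then yields the supremum. For $s\in[0,T]$ and a self-adjoint $Y\in\mathcal{A}^{sa}_s$, let $X_{s,Y}(\cdot)$ denote the exact solution of \cref{intro-freeSDE-diffform} started at $Y$ at time $s$, driven by the increments of $W$ after $s$; these are free from $\mathcal{W}_s$, so the flow is well defined in $\mathcal{A}$ and satisfies the semigroup property $X_{s,Y}(t)=X_{r,X_{s,Y}(r)}(t)$ for $s\le r\le t$. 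In this notation the exact value is $X_k=X_{0,X_0}(t_k)$ and $\oX_{k+1}$ is one fEMM step applied to $\oX_k$.

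First I would introduce the backward value functional $u_k(Y):=\varphi\!\left(f\big(X_{t_k,Y}(T)\big)\right)$, the free analog of the solution of the Kolmogorov backward equation. Because $\oX_0=X_0$, the semigroup property gives $u_0(\oX_0)=\varphi(f(X_T))$ and $u_L(\oX_L)=\varphi(f(\oX_L))$, so the global weak error telescopes as $\varphi(f(\oX_L))-\varphi(f(X_T))=\sum_{k=0}^{L-1}\big(u_{k+1}(\oX_{k+1})-u_{k+1}(Y_k)\big)$, where $Y_k:=X_{t_k,\oX_k}(t_{k+1})$ is the exact one-step evolution of $\oX_k$ and I have used the flow identity $u_k(\oX_k)=u_{k+1}(Y_k)$. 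Each summand therefore compares, from the common starting element $\oX_k$, one numerical step against one exact step, both read through the functional $u_{k+1}$.

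Next I would bound each summand by $O(\Delta t^2)$, which is the content of transporting \cref{thm:weakonestep} with the test functional $u_{k+1}$ in place of $f$: the two one-step evolutions differ by the remainder $\rho$, whose trace is $O(\Delta t^2)$ by \cref{lem:weak:onestep}, and $u_{k+1}$ is smooth, so the induced difference is again $O(\Delta t^2)$. Summing the $L=T/\Delta t$ terms then gives a global bound of order $L\cdot\Delta t^2=O(\Delta t)$, i.e. weak order $p=1$, and passing to the supremum over $t_k$ yields \cref{eq:weak-error-fem-order1}.

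The hard part will be making this transport rigorous, and it is genuinely harder than in the commutative case. The reason is that \cref{thm:weakonestep} applies to scalar test functions $f\in W_2(\R)$ entering through $\varphi(f(\cdot))$, whereas the free value functional $u_{k+1}(Y)=\varphi(f(X_{t_{k+1},Y}(T)))$ is a \emph{nonlinear} functional of the spectral distribution of $Y$ (it is built from free convolutions along the flow), and so is not of the form $\varphi(g(\cdot))$ for any fixed scalar $g$. To push the expansion through I would: (i) use the deterministic PDE governing the Cauchy transform of $X_t$ (\cite[Proposition 3.7]{kargin}, \cite{Biane1998}) to establish Fr\'echet-differentiability of $u_k$ together with bounds on its derivatives that are uniform in $k$ and $\Delta t$ --- the free counterpart of Kolmogorov-backward regularity; (ii) represent the first variation of $u_{k+1}$ by a bounded operator via the trace-collapse identity $\varphi(T_{f^{[1]}}^{A,A}(B))=\varphi(f'(A)B)$; and (iii) prove the resulting operator-weighted refinement of \cref{lem:weak:onestep}, splitting the weight into a $\mathcal{W}_{t_k}$-part, which annihilates the martingale increments in $\rho$ by freeness and vanishing trace exactly as in \cref{lem:weak:onestep}, and a higher-order correction. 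Securing the regularity in (i) and the uniform weighted one-step bound in (iii) --- not the telescoping bookkeeping --- is where the real work lies.
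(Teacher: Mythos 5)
Your overall route is the same one the paper intends: the paper's entire proof of \cref{weak-main-thm} is the single sentence ``The proof copies from \cite{milsteintretyakov}, Theorem 2.1,'' i.e.\ exactly the local-to-global accumulation argument you set up, telescoping the global error through a backward value functional and invoking the one-step estimate of \cref{thm:weakonestep} on each term. So structurally you and the authors agree.

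However, your proposal is not yet a proof, and the gap sits exactly where you flag it. \cref{thm:weakonestep} bounds the one-step error only for test functionals of the linear spectral form $Y\mapsto\varphi(f(Y))$ with $f\in W_2(\R)$, whereas the telescoping requires applying a one-step bound to the value functional $u_{k+1}(Y)=\varphi\left(f\left(X_{t_{k+1},Y}(T)\right)\right)$. As you correctly observe, by freeness of the driving increments this functional factors through the spectral distribution $\mu_Y$ of $Y$, but it does so \emph{nonlinearly}, so it is not of the form $\varphi(g(Y))$ for any scalar $g$, and the one-step theorem cannot be invoked for it as stated. Your items (i)--(iii) --- uniform-in-$k$ regularity of $u_k$ via the Cauchy-transform PDE, a trace representation of its first variation, and an operator-weighted refinement of \cref{lem:weak:onestep} --- are precisely what would be needed, but they are left as a plan, and each requires substantive work (in particular, the Fr\'echet differentiability of the flow map $Y\mapsto X_{t_{k+1},Y}(T)$ in $\mathcal{A}$ with bounds uniform in $\Delta t$ is not established anywhere in the paper or in the references it cites for this step). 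It is worth noting that the paper's one-line proof silently shares this same gap: citing \cite{milsteintretyakov} does not by itself address the non-commutative obstruction you describe, since the classical argument's use of the Kolmogorov backward function as a new test function has no verbatim free analog. So your diagnosis is a genuine contribution, but as it stands your proposal establishes the theorem no more than the paper's citation does.
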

\begin{proof}
The proof copies from \cite{milsteintretyakov}, Theorem 2.1.
\end{proof}
\begin{remark} In the classical setting of commutative stochastic differential equations the weak order of convergence $p$ is valid for functions $f\in C^{2(p+1)}(\R)$ (\cite{milsteintretyakov}). In the non-commutative setting for $p=1$ we require $f\in W_2(\R)$.
\end{remark}

\section{Examples}
\label{sec:Examples}
In this section we consider the numerical solution of several free differential equations taken from \cite{kargin} and \cite{freeCIR}. We compare the numerically determined spectral distribution with theoretical results and numerically verify strong and weak convergence properties of fEMM.\\
Before we start with examples it is necessary to note some details of the implementation of fEMM and the realization of the free Brownian motion.
The implementation of fEMM acts on the von Neumann Algebra $\mathcal{M}_N(\R)$ of random matrices (see the bottom row in diagram \cref{fig:freeEM-diagramm}). The implementation of fEMM starts by dividing the interval $[0,T]$ into $L=2^l$, $l\in \N$ intervals with stepsize $\Delta t=T/L$ ($T>0$). A free Brownian motion $(W_t)_{t\geq 0}$ is then realized on each time point $t_i=i\Delta t, i=0,\dots,L$. Implementation of fEMM requires generation of increments of the free Brownian motion. We generate $L$ matrices
$	\Delta W_i=\sqrt{\frac{\Delta t}{2N}}(A+A^T), \, i=1,\dots,L,$
where $A=(a_{ij})$ is an $N\times N$ Matrix with independent and standard normally distributed elements $a_{ij}=N(0,1)$. These matrices $\Delta W_i$ are interpreted as the increments $W(t_i)-W(t_{i-1}), \, i=1,\dots,L$
of the free Brownian motion $(W_t)_{t\geq0}$ on the interval $[t_{i-1},t_i[, \, i=1,\dots,L$. The increments $\Delta W_i$ are free from each other and have variance $\Delta t$. To determine the order of strong convergence of fEMM numerically, we first generate $M\in\N$ number of paths and then evaluate the $L_1(\varphi)$-norm of $\oX_L^N-X_T^N$ for each path at the end point $T=L\Delta t>0$. Calculating the expectation over the number $M>0$ of paths by
$
	\mathbb{E}\left(  \text{tr}\left(  \left|\oX_L^N-X_T^N\right|  \right)     \right)/N, 
$
this value is taken as an approximation to the strong error defined by \cref{def-strong-converg-fEMM} on matrix level $\mathcal{M}_N^{sa}(\R)$. To overcome the limitation that the exact $\mathcal{M}_N^{sa}(\R)$-valued solution $X_T^N$ (as $\Delta t \rightarrow 0$) is in general unknown, we choose a minimal time step $\Delta t_{min}$ and calculate $\oX_L^N$ ($T=L\Delta t_{min}$), where the free Brownian motion realized with increments $\Delta W_i^{min}$ of variance $\Delta t_{min}$. Then we take $\oX_L^N$ as an approximation to the unknown matrix-valued solution $X_T^N$. To check the strong convergence properties we choose larger time steps $\Delta t_R=R\Delta t_{min}$ with $L/R\in\N$ and $R<L$ and employ fEMM with a corresponding free Brownian motion generated by the increments
$
	\Delta W_j^R = \sum\limits_{i=jR}^{(j-1)R} \Delta W_i^{min},\, j=1,\dots,L/R.
$
Since the increments $\Delta W_i^{min}$ are free, the variance of $\Delta W_j^R$ sum up to $\Delta t_R$. The expression 
\begin{equation}\label{eq:numerical-strong-error}
	e_s(\Delta t) = \mathbb{E}\left(  \text{tr}\left(  \left|\oX_{L/R}^N-\oX_L^N\right|  \right)     \right)/N
\end{equation}
is then taken as the strong error at time point $T=L\Delta t_{min}=\Delta t_RL/R$. 
The weak error (\ref{eq:weak-error-fem-order1}) is numerically evaluated for $f=id$ by
\begin{equation}\label{eq:numerical-weak-error}
	e_w(\Delta t)=\left| \mathbb{E}\left(\varphi\left(\oX_T^N\right)\right)-\varphi\left(X_T\right)\right|.
\end{equation}
Note that on matrix level we have $\varphi\left(\oX_T^N\right)=\text{tr}(\oX_T^N)/N$. The equations considered in the following allow the exact calculation of $\varphi(X_T)$ (for $X_t\in\mathcal{A}$ that is, including $N\rightarrow \infty$).

\subsection{Free Ornstein-Uhlenbeck Equation}
We start with the free variant of the Ornstein-Uhlenbeck equation
\begin{equation}\label{OU-equations-numerics}
	dX_t=\theta X_tdt + \sigma dW_t ,\, X_0=0, t\geq 0
\end{equation}
where $\theta,\sigma\in\R$. This equation was studied analytically in \cite{kargin} by deriving and solve a partial differential equations for the Cauchy transform of $X_t$. It turns out that the solution $X_t$ is semicircle at each time point $t\geq0$. For $\theta>0$ the time dependent radius is given by 
$	R(t)=\sqrt{\frac{2\sigma^2}{\theta}\left(e^{2\theta t}-1\right)}.$
For the cases $\theta\leq0$ we refer to \cite{kargin}.
\cref{fig:ou-distri} shows the empirical probability density function of the eigenvalues of $\oX_{1024}^{500}$ calculated by fEMM for $T=1$ with a time step $\Delta t=2^{-10}$ and matrix size of $N=500$. The red line in \cref{fig:ou-distri} shows the semicircle distribution for the case $N\rightarrow \infty$.
\begin{figure}[tbhp]
	\centering
	\begin{tikzpicture}[scale=0.4, declare function={h(\z)=sqrt(3.575^2-\z^2);}]
		\begin{axis}[ grid=major] 
			\addplot +[
			hist={density,
				bins=20,
				data min=-3.7,
				data max=3.7,			
			}, 
			mark=none, yscale=20, line width=1pt, color=black, fill=blue, fill opacity=0.3
			] table [y index=0] {OU/Xt.dat};
			\addplot [smooth, domain=-3.575:3.575, red, line width=1.0pt] {h(x)};
		\end{axis}
	\end{tikzpicture}
	\caption{Distribution of the eigenvalues of the solution $\oX_L^N$ at $T=1$ of \cref{OU-equations-numerics} for $\theta=\sigma=1$, $L=1024$ and $N=500$. The exact solution $X_T$ at $T=1$ is semicircle with $R\approx 3.575$.}
	\label{fig:ou-distri}
\end{figure}
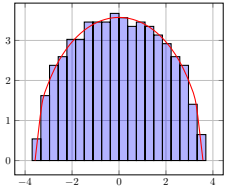
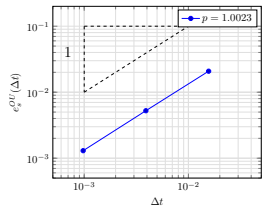
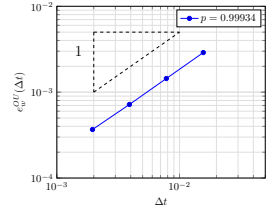
\begin{figure}[thbp]
	\centering
	\captionsetup[subfigure]{justification=centering}
	\subfloat[$\theta=1$, $\sigma=1$, $N=100$, $M=224000$]{
		\begin{tikzpicture}[scale=0.4]
			\begin{loglogaxis}[
				xlabel = {$\Delta t$},
				ylabel = {$e_s^{OU}(\Delta t)$},
				xmin=0.0005, xmax=0.05, ymin=0.0005, ymax=0.2,
				grid=both,
				minor grid style={gray!25},
				major grid style={gray!25},
				legend entries = {$p=1.0023$},
				]
				\addplot table[x index=0, y index = 1] {OU/OU-Strong.dat};
				\addplot[mark=none,black, dashed] coordinates {(0.001,0.1) (0.01,0.1)};
				\addplot[mark=none,black, dashed] coordinates {(0.001,0.1) (0.001,0.01)};
				\addplot[mark=none,black, dashed] coordinates {(0.001,0.01) (0.01,0.1)};
				\node[] at (axis cs: 0.0007,0.04) {\Large $1$};	
			\end{loglogaxis}
		\end{tikzpicture}
		\label{subfig:OU-strong}
	}\hspace{1cm}
	\subfloat[$\theta=2$, $\sigma=1$, $N=100$, $M=256000$]{
		\begin{tikzpicture}[scale=0.4]
			\begin{loglogaxis}[
				xlabel = {$\Delta t$},
				ylabel = {$e_w^{OU}(\Delta t)$},
				xmin=1e-3, xmax=5e-2, ymin=1e-4, ymax=1e-2,
				grid=both,
				minor grid style={gray!25},
				major grid style={gray!25},
				legend entries = {$p=0.99934$},
				]
				\addplot table[x index=0, y index = 1] {OU/OU-Weak.dat};
				\addplot[mark=none,black, dashed] coordinates {(0.002,0.005) (0.01,0.005)};
				\addplot[mark=none,black, dashed] coordinates {(0.002,0.005) (0.002,0.001)};
				\addplot[mark=none,black, dashed] coordinates {(0.002,0.001) (0.01,0.005)};
				\node[] at (axis cs: 0.0015,0.003) {\Large $1$};		
			\end{loglogaxis}
		\end{tikzpicture}	\label{subfig:OU-weak}
	}
	\caption{Strong and weak convergence properties of fEMM applied to the free Ornstein-Uhlenbeck  \cref{OU-equations-numerics} at $T=1$.}	\label{fig:ou-strong-weak}
\end{figure}
\cref{fig:ou-strong-weak} shows strong and weak convergence properties of fEMM applied to \cref{OU-equations-numerics}.  As discussed above we employ a minimum time step of $\Delta t_{min}=2^{-16}$ and calculate the strong error as
$
	e_s^{OU}(\Delta t) = \mathbb{E}\left(  \text{tr}\left(  \left|\oX_{L/R}^N-\oX_L^N\right|  \right)     \right)/N
$
for $L=2^{16}$ ($T=1$) and $\Delta t=R\Delta t_{min}$ with $R=6,8,10$.
\cref{subfig:OU-strong} shows strong convergence order of $p=1$.
This is not a contradiction to the expected value of $p=0.5$. If the coefficients $b,c$ of the free Brownian motion in the fSDE are constant, the fEMM shows a higher convergence order. This is an analog to the commutative case (\cite{HighamKloeden2020}).
Considering weak convergence of fEMM applied to \cref{OU-equations-numerics} is shown in \cref{subfig:OU-weak}. The weak error (\ref{eq:weak-error-fem-order1}) is numerically evaluated for $f=id$ by
$
	e_w^{OU}(\Delta t)=\left| \mathbb{E}\left(\varphi\left(\oX_L^N\right)\right)\right|
$
with $L=2^{12}$. Note that on matrix level we have $\varphi\left(\oX_L^N\right)=tr(\oX_L^N)/N$ and $\varphi\left(X_T\right)=0$, since the eigenvalue distribution of the solution $X_T$ of  \cref{OU-equations-numerics} is a centered semicircle distribution. The numerically estimated convergence order corresponds very well the theoretical value of $p=1$.
\subsection{Geometric Brownian Motion I}
Suppose that $X_t\in\mathcal{A}^{sa}$ satisfies the following equation
\begin{equation}\label{eq:geoI-numerics}
	dX_t=\theta X_tdt + X_t^{\frac{1}{2}}dW_tX_t^{\frac{1}{2}},\, X_0=I.
\end{equation}
\begin{figure}[tbhp]
	\centering
	\captionsetup[subfigure]{justification=centering}
	\subfloat[$t=100 \Delta t$]{
		\begin{tikzpicture}[scale=0.4]
			\begin{axis}[ xmax=4,ymax=1.3, grid=major] 
				\addplot +[
				hist={density,
					bins=10,
					data min=0.56,
					data max=1.94,			
				}, 
				mark=none,  line width=1pt, color=black, fill=blue, fill opacity=0.3
				] table [y index=0] {GEOI/GEOI-100-hist.dat};
				\addplot [ color=red, line width=1pt] table {GEOI/Xt_analy_100.txt};
			\end{axis}
	\end{tikzpicture}}\hspace{0.5cm}
	\subfloat[$t=300 \Delta t$]{
		\begin{tikzpicture}[scale=0.4]
			\begin{axis}[ xmax=4, ymax=1.3, grid=major] 
				\addplot +[
				hist={density,
					bins=20,
					data min=0.41,
					data max=3.38,			
				}, 
				mark=none,  line width=1pt, color=black, fill=blue, fill opacity=0.3
				] table [y index=0] {GEOI/GEOI-300-hist.dat};
				\addplot [ color=red, line width=1pt] table {GEOI/Xt_analy_300.txt};			
			\end{axis}
	\end{tikzpicture}}\hspace{0.5cm}
	\subfloat[$t=1024 \Delta t$]{
		\begin{tikzpicture}[scale=0.4]
			\begin{axis}[ xmax=14, ymax=0.8, grid=major] 
				\addplot +[
				hist={density,
					bins=30,
					data min=0.23,
					data max=13.2			
				}, 
				mark=none,  line width=1pt, color=black, fill=blue, fill opacity=0.3
				] table [y index=0] {GEOI/GEOI-1024-hist.dat};
				\addplot [ color=red, line width=1pt] table {GEOI/Xt_analy_1024.txt};			
			\end{axis}
		\end{tikzpicture}
	}
	\caption{Spectral Distribution of $\oX_t^N$  of \cref{eq:geoI-numerics} approximated by fEMM at different time points. The red line is the spectral distribution of the exact solution $X_t$ recovered from it's Cauchy transform.}
	\label{fig:geobrown1}
\end{figure}
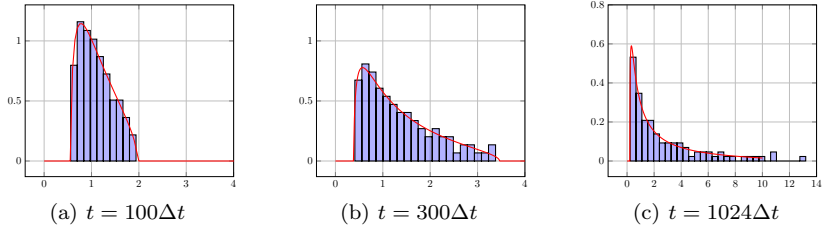
In \cite[Proposition 3.8]{kargin} it is stated that the spectral distribution of $X_t$ is supported on the interval $[I^1(t),I^2(t)]$, where
$
	I^i(t)=\frac{r_i(t)}{1+r_i(t)}e^{(\theta-1-r_i(t))}, \, 
	r_i(t)=\frac{-1\pm\sqrt{1+4/t}}{2}, \, i=1,2.
$
By applying the trace $\varphi$ to \cref{eq:geoI-numerics} it follows that $\varphi(X_t)=e^{\theta t}$. The variance of the spectral distribution is $te^{2\theta t}$ and the ratio of the standard deviation to the expectation of $X_t$ is $\sqrt{t}$ (\cite{kargin}). \cref{fig:geobrown1} shows the empirical spectral distribution of $\oX_t^N$ for $N=100$, $\Delta t=2^{-10}$ and $\theta=1$ and different time points. The red line in Figure \cref{fig:GeoI-R1R2} is the recovery of the spectral distribution for $N\rightarrow\infty$ (see \cite{kargin}). \cref{fig:GeoI-R1R2} shows that the time development of the supporting interval of the spectral distribution of $\oX_t^N$ correspond very well to the theoretical values given by $I^1(t)$ and $I^2(t)$.
\begin{figure}[tbhp]
	\centering
	\includegraphics[width=0.34\linewidth]{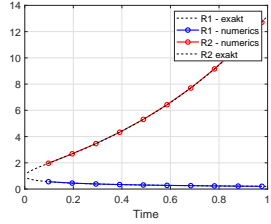}
	\caption{Comparison of boundaries of support interval $[R1=I^1(t),R2=I^2(t)]$ of the density of spectral distribution of \cref{eq:geoI-numerics} for $\theta=1$, $N=100$.}
	\label{fig:GeoI-R1R2}
\end{figure}
\begin{figure}[thbp]
	\centering
	\captionsetup[subfigure]{justification=centering}
	\subfloat[Strong convergence,\\ $N=10$, $\theta=0.1$, $M=250$]{
		\begin{tikzpicture}[scale=0.4]
			\begin{loglogaxis}[
				xlabel = {\Large $\Delta t$},
				ylabel = {\Large $e_s^{GeoI}(\Delta t)$},
				xmin=0.0001, xmax=0.1, ymin=0.005, ymax=0.2,
				grid=both,
				minor grid style={gray!25},
				major grid style={gray!25},
				legend entries = {$p=0.49989$},
				]
				\addplot table[x index=0, y index = 1] {GEOI/GEOI-Strong.txt};
				\addplot[mark=none,black, dashed] coordinates {(0.001,0.1) (0.01,0.1)};
				\addplot[mark=none,black, dashed] coordinates {(0.001,0.1) (0.001,0.031622)};
				\addplot[mark=none,black, dashed] coordinates {(0.001,0.031622) (0.01,0.1)};
				\node[] at (axis cs: 0.0007,0.06) {\Large $\frac{1}{2}$};		
			\end{loglogaxis}
		\end{tikzpicture}
		\label{fig:GeoI-Strong}
	}\hspace{1cm}
	\subfloat[Weak convergence,\\ $N=50$, $\theta=1$, $M=112000$]{
		\begin{tikzpicture}[scale=0.4]
			\begin{loglogaxis}[
				xlabel = {\Large $\Delta t$},
				ylabel = {\Large $e_w^{GeoI}(\Delta t)$},
				xmin=1e-4, xmax=5e-2, ymin=2e-4, ymax=5e-2,
				grid=both,
				minor grid style={gray!25},
				major grid style={gray!25},
				legend entries = {$p=0.9734$},
				]
				\addplot table[x index=0, y index = 1] {GEOI/GEOI-Weak.dat};
				\addplot[mark=none,black, dashed] coordinates {(1e-3,1e-2) (2e-4,1e-2)};
				\addplot[mark=none,black, dashed] coordinates {(2e-4,1e-2) (2e-4,2e-3)};
				\addplot[mark=none,black, dashed] coordinates {(2e-4,2e-3) (1e-3,1e-2)};
				\node[] at (axis cs: 5e-4,5e-3) {\Large $1$};		
			\end{loglogaxis}
		\end{tikzpicture}
		\label{fig:GeoI-Weak}
	}
	\caption{Strong and weak convergence properties of the numerical solution of \cref{eq:geoI-numerics}.}
	\label{fig:GeoI-Strong-Weak}
\end{figure}
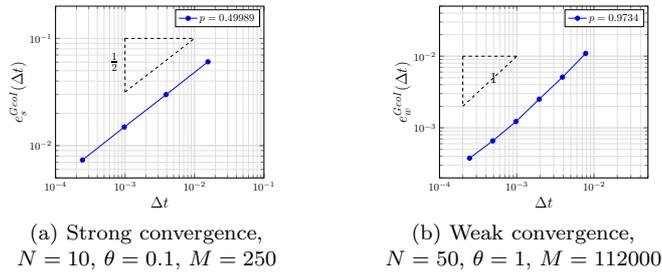

Strong and weak convergence properties of fEMM applied to \cref{eq:geoI-numerics} is shown in \cref{fig:GeoI-Strong-Weak}. The graph in \cref{fig:GeoI-Strong} shows the approximation of the strong error
$
	e_s^{GeoI}(\Delta t) = \mathbb{E}\left(  \text{tr}\left(  \left|\oX_{L/R}^N-\oX_L^N\right|  \right)     \right)/N,
$
where $\oX_L^N$ is calculated by a minimal time step of $\Delta t=2^{-16}$ and $\oX_{L/R}^N$ by time steps $R\Delta t_{min}$ with $R=6,8,10,12$. The expected value is determined over $M=250$ different paths. The slope of the straight line in \cref{fig:GeoI-Strong} shows numerically convergence order of $p=0.5$ which is in accordance to Theorem \cref{thm1}. fEMM applied to \cref{eq:geoI-numerics} shows weak convergence order $p=1$ at $T=1$ as shown in \cref{fig:GeoI-Weak}. The weak error is calculated on matrix level as
$
	e_w^{GeoI}(\Delta t)=\left| \mathbb{E}\left(\text{tr}\left(\oX_L^N\right)\right)/N-e^{\theta T}\right|
$
for $6$ different time steps $\Delta t=R/L$ ($T=1$) for $L=2^{12}$ and $R=1,2,4,8,16,32$. Again, we have good correspondence between numerical and theoretical results.

\subsection{Free CIR-Process}

Consider the equation
\begin{equation}\label{eq:CIR-SDE}
	dX_t=(a-bX_t)dt + \frac{\sigma}{2}\sqrt{X_t}dW_t+\frac{\sigma}{2}dW_t\sqrt{X_t},\, X_0=I,
\end{equation}
\begin{figure}[tbhp]
	\centering
	\captionsetup[subfigure]{justification=centering}
	\subfloat[$t=100 \Delta t$]{
		\begin{tikzpicture}[scale=0.4]
			\begin{axis}[grid=major, xmin=0, xmax=5, ymin=0, ymax=2.5] 
				\addplot +[
				hist={density,
					bins=10,
					data min=0.74,
					data max=1.36,			
				}, 
				mark=none,  line width=1pt, color=black, fill=blue,
				fill opacity=0.3
				] table [y index=0] {CIR/CIR-100-hist.dat};
			\end{axis}
	\end{tikzpicture}}\hspace{1cm}
	\subfloat[$t=4000 \Delta t$]{
		\begin{tikzpicture}[scale=0.4]
			\begin{axis}[grid=major,grid=major, xmin=0, xmax=5, ymin=0, ymax=2.5] 
				\addplot +[
				hist={density,
					bins=20,
					data min=0.42,
					data max=3.43,			
				}, 
				mark=none,  line width=1pt, color=black, fill=blue, fill opacity=0.3
				] table [y index=0] {CIR/CIR-4000-hist.dat};			
			\end{axis}
	\end{tikzpicture}}
	\caption{Time development of the empirical spectral distribution of $\oX_t^N$ of the numerical solution of the free CIR \cref{eq:CIR-SDE} with $a=2, b=1, \sigma=1, \Delta t=2^{-12}$ and $N=50$.}
	\label{fig:CIR-distri}
\end{figure}
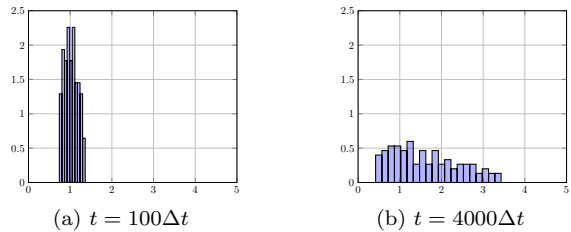
\begin{figure}[thbp]
	\centering
	\captionsetup[subfigure]{justification=centering}
	\subfloat[Strong convergence. $N=10,M=250$]{\begin{tikzpicture}[scale=0.4]
			\begin{loglogaxis}[
				xlabel = {\Large $\Delta t$},
				ylabel = {\Large $e_s^{CIR}$},
				xmin=0.0001, xmax=0.1, ymin=0.002, ymax=0.2,
				grid=both,
				minor grid style={gray!25},
				major grid style={gray!25},
				legend entries = {$p=0.502349$},
				]
				\addplot table[x index=0, y index = 1] {CIR/CIR-Strong.dat};
				\addplot[mark=none,black, dashed] coordinates {(0.001,0.1) (0.01,0.1)};
				\addplot[mark=none,black, dashed] coordinates {(0.001,0.1) (0.001,0.031622)};
				\addplot[mark=none,black, dashed] coordinates {(0.001,0.031622) (0.01,0.1)};
				\node[] at (axis cs: 0.0007,0.06) {\Large $\frac{1}{2}$};		
			\end{loglogaxis}
		\end{tikzpicture}
	}\hspace{1cm}
	\subfloat[Weak convergence, $N=50,M=112000$]{
		\begin{tikzpicture}[scale=0.4]
			\begin{loglogaxis}[
				xlabel = {\Large $\Delta t$},
				ylabel = {\Large $e_w^{CIR}$},
				xmin=1e-4, xmax=1e-1, ymin=1e-5, ymax=5e-2,
				grid=both,
				minor grid style={gray!25},
				major grid style={gray!25},
				legend entries = {$p=0.87632$},
				]
				\addplot table[x index=0, y index = 1] {CIR/CIR-Weak.dat};
				\addplot[mark=none,black, dashed] coordinates {(1e-3,1e-2) (1e-2,1e-2)};
				\addplot[mark=none,black, dashed] coordinates {(1e-3,1e-2) (1e-3,1e-3)};
				\addplot[mark=none,black, dashed] coordinates {(1e-3,1e-3) (1e-2,1e-2)};
				\node[] at (axis cs: 5e-4,5e-3) {\Large $1$};		
			\end{loglogaxis}
		\end{tikzpicture}
	}
	\caption{Strong and weak convergence properties of the free CIR \cref{eq:CIR-SDE} with parameter values $a=2, b=1, \sigma=1$}
	\label{fig:CIR-Strong-Weak}
\end{figure}
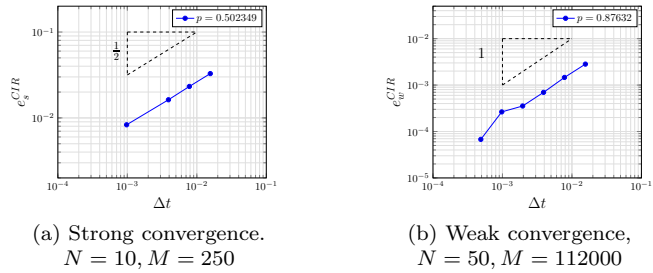
where $a,b,\sigma>0$ are such that $2a^2\geq\sigma^2$ (see \cite{freeCIR}).
The expected value can be calcuated by applying $\varphi$ to \cref{eq:CIR-SDE} which gives
$
\varphi(X_t)=\frac{e^{-bt}}{b}\left(b+a(e^{bt}-1)\right).
$
So far, no further sprectral properties are known.
The time development the spectral distribution of the numerical solution is shown in \cref{fig:CIR-distri}.  Strong and weak convergence is shown in \cref{fig:CIR-Strong-Weak}.
~\\~
\section{Conclusions}
\label{sec:conclusions}
In this paper we developed a free analog of the well known  Euler-Maruyama method. Up to the knowledge of the author the numerical treatment of fSDEs are considered for the first time. From Taylor series expansion of operator valued functions we deriveed an iterated free It\^{o} formula. This offers to motivate and define the free Euler-Maruyama method (fEMM) and proved strong and weak convergence properties. We considered the implementation of the method by approximating the elements in von Neumann algebras by self-adjoint random matrices. The method recovers well known analytical results and convergence properties where numerically verified.
\appendix
\section{Pointwise Convergence of the Cauchy transform}
\begin{lemma}\label{lemma:ChaucyPointwise}
	Let $(X_k)$ be a sequence in a von Neumann Algebra $\mathcal{A}^{sa}$. Assume that $(X_k)$ converges in the $L_1(\varphi)$-norm to $X\in\mathcal{A}^{sa}$. Then the sequence $(G_k)$ of Cauchy transforms of $X_k$ converges on $\C^+$ pointwise to the Cauchy transform $G_X$.
\end{lemma}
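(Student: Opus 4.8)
The plan is to reduce the pointwise convergence of the Cauchy transforms entirely to the hypothesis $\|X_k - X\|_1 \to 0$ by means of the second resolvent identity. Fix $z \in \C^+$ and set $y = \operatorname{Im}(z) > 0$. Since each $X_k$ and $X$ lies in $\mathcal{A}^{sa}$, their spectra are contained in $\R$, so $z$ belongs to the resolvent set and the resolvents $(X_k - z)^{-1}$ and $(X - z)^{-1}$ are well-defined elements of $\mathcal{A}$, with $G_k(z) = \varphi((X_k-z)^{-1})$ and $G_X(z) = \varphi((X-z)^{-1})$.

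First I would record the spectral resolvent bound: for any self-adjoint $S \in \mathcal{A}$ the spectral theorem gives $\|(S-z)^{-1}\| = 1/\operatorname{dist}(z,\sigma(S)) \le 1/y$. This estimate is uniform in $S$ and, crucially, does not involve $\|S\|$; this matters because the sequence $(X_k)$ is only assumed to converge in $L_1(\varphi)$ and need not be bounded in operator norm. Next, applying the resolvent identity
\[
(X_k-z)^{-1} - (X-z)^{-1} = -(X_k-z)^{-1}(X_k-X)(X-z)^{-1}
\]
and taking the trace yields
\[
G_k(z) - G_X(z) = -\varphi\!\left((X_k-z)^{-1}(X_k-X)(X-z)^{-1}\right).
\]

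To estimate this, I would use cyclicity of $\varphi$ to group the two bounded resolvent factors together, writing the right-hand side as $-\varphi(M\,(X_k-X))$ with $M = (X-z)^{-1}(X_k-z)^{-1} \in \mathcal{A}$, and then invoke the noncommutative H\"older (duality) inequality $|\varphi(MN)| \le \|M\|\,\|N\|_1$. Combined with $\|M\| \le \|(X-z)^{-1}\|\,\|(X_k-z)^{-1}\| \le 1/y^2$ this gives
\[
|G_k(z) - G_X(z)| \le \frac{1}{y^2}\,\|X_k - X\|_1,
\]
and the right-hand side tends to $0$ by hypothesis, establishing pointwise convergence on $\C^+$. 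There is no serious obstacle here: the only point requiring care is precisely the absence of a uniform operator-norm bound on the $X_k$, which the argument circumvents by using the spectral (rather than norm-based) control $\|(S-z)^{-1}\| \le 1/y$ on the resolvents.
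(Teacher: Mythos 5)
Your proposal is correct and follows essentially the same route as the paper's proof: the spectral bound $\|(S-z)^{-1}\|\leq 1/\operatorname{Im}(z)$ for self-adjoint $S$, the second resolvent identity, and the H\"older-type estimate $|\varphi(ABC)|\leq\|A\|\,\|B\|_1\,\|C\|$ giving $|G_k(z)-G_X(z)|\leq\|X_k-X\|_1/\operatorname{Im}(z)^2$. You merely make explicit the resolvent identity and the duality step that the paper leaves implicit in its final displayed inequality.
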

\begin{proof}
	Let $z\in \mathbb{C}^+$. Since $X_k\in \mathcal{A}^{sa}$ it follows that $\|(z-X_k)^{-1}\| \leq \frac{1}{Im(z)}$ by applying functional calculus to the normal element $(z-X_k)^{-1}$. The same holds for $X$. The statement follows by the estimation
	$
		\left|\varphi\left( (z-X_k)^{-1}\right)-\varphi\left( (z-X)^{-1} \right) \right| 
		\leq 	\|X_k-X\|_1 \|(z-X_k)^{-1}\| \|(z-X)^{-1}\|.
	$
\end{proof}

\bibliographystyle{siamplain}
\bibliography{references-arxiv}

\begin{thebibliography}{10}

\bibitem{ADHIKARI2022108260}
{\sc S.~Adhikari and S.~Chakraborty}, {\em Random matrix eigenvalue problems in
  structural dynamics: An iterative approach}, Mechanical Systems and Signal
  Processing, 164 (2022).

\bibitem{AnGao}
{\sc G.~An and M.~Gao}, {\em Poisson processes in free probability}, 2015,
  \url{https://doi.org/10.48550/ARXIV.1506.03130}.

\bibitem{anderson_guionnet_zeitouni_2009}
{\sc G.~W. Anderson, A.~Guionnet, and O.~Zeitouni}, {\em An Introduction to
  Random Matrices}, Cambridge Studies in Advanced Mathematics, Cambridge
  University Press, 2009, \url{https://doi.org/10.1017/CBO9780511801334}.

\bibitem{anshelevic}
{\sc M.~Anshelevich}, {\em Itô formula for free stochastic integrals}, Journal
  of Functional Analysis, 188 (2002), pp.~292--315,
  \url{https://doi.org/https://doi.org/10.1006/jfan.2001.3849}.

\bibitem{azamov_carey_dodds_sukochev_2009}
{\sc N.~A. Azamov, A.~L. Carey, P.~G. Dodds, and F.~A. Sukochev}, {\em Operator
  integrals, spectral shift, and spectral flow}, Canadian Journal of
  Mathematics, 61 (2009), p.~241–263,
  \url{https://doi.org/10.4153/CJM-2009-012-0}.

\bibitem{barnodorff10.2307/3318705}
{\sc O.~E. Barndorff-Nielsen and S.~Thorbjørnsen}, {\em Self-decomposability
  and {Lévy} processes in free probability}, Bernoulli, 8 (2002),
  pp.~323--366, \url{http://www.jstor.org/stable/3318705} (accessed
  2022-07-28).

\bibitem{BianezbMATH01003147}
{\sc P.~{Biane}}, {\em Free brownian motion, free stochastic calculus and
  random matrices}, in Free probability theory. Papers from a workshop on
  random matrices and operator algebra free products, Toronto, Canada, Mars
  1995, Providence, RI: American Mathematical Society, 1997, pp.~1--19.

\bibitem{Biane1998-2}
{\sc P.~Biane}, {\em Processes with free increments}, Mathematische Zeitschrift
  volume, 227 (1998), pp.~143--174, \url{https://doi.org/10.1007/PL00004363}.

\bibitem{Biane1998}
{\sc P.~Biane and R.~Speicher}, {\em Stochastic calculus with respect to free
  brownian motion and analysis on {Wigner} space}, Probability Theory and
  Related Fields, 112 (1998), pp.~373--409,
  \url{https://doi.org/10.1007/s004400050194}.

\bibitem{BIANESPEICHER2001581}
{\sc P.~Biane and R.~Speicher}, {\em Free diffusions, free entropy
  and free fisher information}, Annales de l'Institut Henri Poincare (B)
  Probability and Statistics, 37 (2001), pp.~581--606,
  \url{https://doi.org/10.1016/S0246-0203(00)01074-8}.

\bibitem{Bouchaud2015}
{\sc J.-P. Bouchaud and M.~Potters}, {\em Financial applications of random
  matrix theory: a short review}, The Oxford Handbook of Random Matrix Theory,
  (2015), p.~823–850.

\bibitem{ZHAOZHI}
{\sc Z.~FAN}, {\em Self-similarity of free stochastic processes}, Infinite
  Dimensional Analysis, Quantum Probability and Related Topics, 09 (2006),
  pp.~451--469, \url{https://doi.org/10.1142/S0219025706002482}.

\bibitem{GAO2006177}
{\sc M.~Gao}, {\em Free {Ornstein–Uhlenbeck} processes}, Journal of
  Mathematical Analysis and Applications, 322 (2006), pp.~177--192,
  \url{https://doi.org/https://doi.org/10.1016/j.jmaa.2005.09.013}.

\bibitem{freeCIR}
{\sc H.~Graf, H.~Port, and G.~Schlüchtermann}, {\em Free \uppercase{CIR}
  processes}, Infinite Dimensional Analysis, Quantum Probability and Related
  Topics,  (2022), \url{https://doi.org/10.1142/S0219025722500126}.

\bibitem{HighamKloeden2020}
{\sc D.~J. Higham and P.~E. Kloeden}, {\em An Introduction to the Numerical
  Simulation of Stochastic Differential Equations}, SIAM, 2021.

\bibitem{Johnstone8412585}
{\sc I.~M. Johnstone and D.~Paul}, {\em Pca in high dimensions: An
  orientation}, Proceedings of the IEEE, 106 (2018), pp.~1277--1292,
  \url{https://doi.org/10.1109/JPROC.2018.2846730}.

\bibitem{kargin}
{\sc V.~Kargin}, {\em On free stochastic differential equations}, Journal of
  Theoretical Probability,  (1998), pp.~373--409,
  \url{https://doi.org/10.1007/s10959-011-0341-z}.

\bibitem{kummererspeicher}
{\sc B.~Kummerer and R.~Speicher}, {\em {Stochastic Integration on the Cuntz
  algebra $O_\infty$}}, Journal of Funtional Analysis,  (1992), pp.~372--408.

\bibitem{Maecki2019UniversalityCF}
{\sc J.~Małecki and J.~L. P\'erez}, {\em Universality classes for general
  random matrix flows},  (2019), \url{https://arxiv.org/abs/1901.02841}.

\bibitem{milsteintretyakov}
{\sc G.~Milstein and N.~Tretyakov}, {\em Stochastic Numerics for Mathematical
  Physics}, Scientific Computation, Springer-Verlag Berlin Heidelberg, 2004,
  \url{https://doi.org/10.1007/978-3-662-10063-9}.

\bibitem{MingoSpeicher2017}
{\sc J.~A. Mingo and R.~Speicher}, {\em Free Probability and Random Matrices},
  Fields Institute Monographs, Springer, 2011,
  \url{https://doi.org/10.1007/978-1-4939-6942-5}.

\bibitem{pisier_2003}
{\sc G.~Pisier}, {\em Introduction to Operator Space Theory}, London
  Mathematical Society Lecture Note Series, Cambridge University Press, 2003,
  \url{https://doi.org/10.1017/CBO9781107360235}.

\bibitem{schwartzNonLinFANA1696}
{\sc J.~T. Schwartz}, {\em Nonlinear Functional Analysis}, Grodon and Breach
  Science Publishers, New York, London, Paris, 1969.

\bibitem{Skripka2019}
{\sc A.~Skripka and A.~Tomskova}, {\em Multiple Operator Integrals}, Springer
  International Publishing, Cham, 2019, pp.~65--112,
  \url{https://doi.org/10.1007/978-3-030-32406-3_4}.

\bibitem{Soize2017}
{\sc C.~Soize}, {\em Uncertainty Quantification}, Interdisciplinary Applied
  Mathematics, Springer, 2017, \url{https://doi.org/10.1007/978-3-319-54339-0}.

\bibitem{Speicher1990}
{\sc R.~Speicher}, {\em A new example of `independence' and `white noise'},
  Probability Theory and Related Fields, 84 (1990), pp.~141--159,
  \url{https://doi.org/10.1007/BF01197843}.

\bibitem{speicher2001free}
{\sc R.~Speicher}, {\em Free calculus}, 2001,
  \url{https://arxiv.org/abs/math/0104004}.

\bibitem{stammvoicuweber}
{\sc N.~Stammeier, D.-V. Voiculescu, and M.~Weber}, {\em Free Probability and
  Operator Algebras}, Münster Lectures in Mathematics, European Mathematical
  Society, 2016.

\bibitem{stone2018}
{\sc L.~Stone}, {\em The feasibility and stability of large complex biological
  networks: a random matrix approach}, Scientific Reports, 8 (2018),
  \url{https://doi.org/10.1038/s41598-018-26486-2}.

\bibitem{TaoBlog}
{\sc T.~Tao}, {\em Blog at wordpress.com. 254a, notes 5: Free probability,
  exercise 25},
  \url{https://terrytao.wordpress.com/2010/02/10/245a-notes-5-free-probability/}.

\bibitem{TaoIntroRMT}
{\sc T.~Tao}, {\em Topics in random matrix theory}, vol.~132 of Graduate
  Studies in Mathematics, American Mathematical Society, 2012.

\bibitem{voicudykemanica}
{\sc D.~V. Voiculescu, K.~Dykema, and A.~Nica}, {\em Free Random Variables},
  CRM monograph series, American Mathematical Society, 1992,
  \url{https://doi.org/10.1090/crmm/001}.

\bibitem{werner}
{\sc D.~Werner}, {\em Funktionalanalysis}, Springer Lehrbuch, Springer
  Spektrum, Berlin, Heidelberg, 2018,
  \url{https://doi.org/https://doi.org/10.1007/978-3-662-55407-4}.

\bibitem{XIAO2017941}
{\sc H.~Xiao, J.-X. Wang, and R.~G. Ghanem}, {\em A random matrix approach for
  quantifying model-form uncertainties in turbulence modeling}, Computer
  Methods in Applied Mechanics and Engineering, 313 (2017), pp.~941--965,
  \url{https://doi.org/10.1016/j.cma.2016.10.025}.

\bibitem{Zhang2015}
{\sc C.~Zhang and R.~C. Qiu}, {\em Massive mimo as a big data system: Random
  matrix models and testbed}, IEEE Access, 3 (2015), p.~837–851,
  \url{https://doi.org/10.1109/access.2015.2433920}.

\end{thebibliography}

\end{document}